\documentclass[11pt]{article}
\usepackage{amssymb,amsmath,amsfonts,amsthm}

\usepackage{epsfig}

\topmargin -.4in
\textheight 8.5in
\textwidth 6in
\oddsidemargin 0.3in
\evensidemargin 0.0in

\numberwithin{equation}{section}

\newcommand{\R}{{\mathbb R}}

\newcommand{\intr}{\int\limits_{\R^N}}

%\doublespacing

\theoremstyle{plain}

\newtheorem{cor}{Corollary}

\newtheorem{theorem}{Theorem}[section]
\newtheorem{lemma}{Lemma}[section]

\theoremstyle{definition}
\newtheorem{defn}{Definition}

\begin{document}

\title{\vskip-0.3in Weighted Choquard equation perturbed with weighted nonlocal term}

\author{Gurpreet Singh\footnote{School of Mathematics, Trinity College Dublin; {\tt gurpreet.bajwa2506@gmail.com}}}

%\date{ 6 February, 2019 }
\maketitle

\begin{abstract}
We investigate the following problem
\begin{equation*}
-{\rm div}(v(x)|\nabla u|^{m-2}\nabla u)+V(x)|u|^{m-2}u= \Big(|x|^{-\theta}*\frac{|u|^{b}}{|x|^{\alpha}}\Big)\frac{|u|^{b-2}}{|x|^{\alpha}}u+\lambda\Big(|x|^{-\gamma}*\frac{|u|^{c}}{|x|^{\beta}}\Big)\frac{|u|^{c-2}}{|x|^{\beta}}u \quad\mbox{ in }\R^{N},
\end{equation*}
where $b, c, \alpha, \beta >0$, $\theta,\gamma \in (0,N)$, $N\geq 3$, $2\leq m< \infty$ and $\lambda \in \R$. Here, we are concerned with the existence of groundstate solutions and least energy sign-changing solutions and that will be done by using the minimization techniques on the associated Nehari manifold and the Nehari nodal set respectively.
\end{abstract}

\noindent{\bf Keywords:} Choquard Equation, weighted $m-$Laplacian, weighted nonlocal perturbation, groundstate solution, least energy sign-changing solutions

\noindent{\bf MSC 2010:} 35A15, 35B20, 35Q40, 35Q75

%\newpage

\section{Introduction}\label{sec1}
In this paper, we study the problem
\begin{equation}\label{fr}
-{\rm div}(v(x)|\nabla u|^{m-2}\nabla u)+V(x)|u|^{m-2}u= \Big(|x|^{-\theta}*\frac{|u|^{b}}{|x|^{\alpha}}\Big)\frac{|u|^{b-2}}{|x|^{\alpha}}u+\lambda\Big(|x|^{-\gamma}*\frac{|u|^{c}}{|x|^{\beta}}\Big)\frac{|u|^{c-2}}{|x|^{\beta}}u \quad\mbox{ in }\R^{N},
\end{equation}
where $b, c, \alpha, \beta >0$, $\theta,\gamma \in (0,N)$, $2\leq m< \infty$, $N\geq 3$, $\lambda \in \R$ and ${\rm div}(v(x)|\nabla u|^{m-2}\nabla u)$ is the weighted $m$-Laplacian. Here $v$ is a Muckenhoupt weight and $|x|^{-\xi} $ is the Riesz potential of order $\xi \in (0, N)$. 
The function $V \in C(\R^{N})$ must satisfy either one or both of the following conditions:

\begin{enumerate}
\item[(V1)] $\inf_{\R^{N}}V(x)\geq V_{0}> 0$ ;
\item[(V2)] For all $M>0$ the set $\{x\in \R^N: V(x)\leq M\}$ has finite Lebesgue measure. 
\end{enumerate}

By taking $\lambda= 0$, the equation \eqref{fr} becomes the weighted Choquard equation driven by weighted $m$-Laplacian and is given by
\begin{equation}\label{squ}
-{\rm div}(v(x)|\nabla u|^{m-2}\nabla u)+V(x)|u|^{m-2}u= \Big(|x|^{-\theta}*\frac{|u|^{b}}{|x|^{\alpha}}\Big)\frac{|u|^{b-2}}{|x|^{\alpha}}u \quad\mbox{ in }\R^{N}.
\end{equation}
The case of $v(x)= V(x) \equiv 1$, $m=2$, $\theta= b= 2$ and $\alpha= 0$ in \eqref{squ} refers to the Choquard or nonlinear Schr\"odinger-Newton equation, that is,
\begin{equation}\label{ce}
-\Delta u+ u= (|x|^{-2}*u^2)u \quad\mbox{ in }\R^{N},
\end{equation}
and it was first studied by Pekar\cite{P1954} in 1954 for $N=3$. The equation \eqref{ce} had been used by Penrose in 1996 as a model in self-gravitating matter(see \cite{P1996}, \cite{P1998}). Also, if $v(x)\equiv 1$, $m= 2$ and $\alpha=\lambda=0$, then \eqref{squ} becomes stationary Choquard equation
$$
-\Delta u+ V(x)u= (|x|^{-\theta}*|u|^b)|u|^{b-2}u \quad\mbox{ in }\R^{N},
$$
which arises in quantum theory and in the theory of Bose-Einstein condensation. The Choquard equation has received a considerable attention in the last few decades and has been appeared in many different contexts and settings(see \cite{AFY2016, AGSY2017, GT2016, MV2017, MS2017, S2019}). In \cite{BY2020}, Benhamida and Yazidi investigated the critical Sobolev problem
\begin{equation}\label{ce1}
\left\{
\begin{aligned}
-{\rm div}(v(x)|\nabla u|^{m-2}\nabla u)&= |u|^{b^{*}-2}u + \lambda|u|^{c-2}u &&\quad\mbox{ in }\Omega,\\
u&> 0 &&\quad\mbox{ in }\Omega,\\
u&= 0 &&\quad\mbox{ on }\partial\Omega,
\end{aligned}
\right.
\end{equation}
where $\Omega \subset \R^N$ is a bounded domain, $N> b\geq 2$, $b\leq c< b^{*}$ and $b^{*}= \frac{Nb}{N-b}$ is called the critical Sobolev exponent. They investigated the existence of positive solutions which depends on the weight $v(x)$. In \cite{BN1983}, Brezis and Nirenberg studied the problem \eqref{ce1} for $v(x)\equiv 1$ and $m=2$ and it has stimulated a several work. The case $v\not \equiv$ constant and $m=2$ received a considerable attention and was considered by Hadiji and Yazidi in \cite{HY2007} for existence and nonexistence results, see also \cite{FS2016, HMPY2006}. 

\smallskip

In this article, we are interested in the groundstate solutions and least energy sign-changing solutions to \eqref{fr} and one could easily see that \eqref{fr} has a variational structure. To this aim, in the subsection below we provide variational framework and main results. 

\subsection{Variational Framework and Main Results}

\begin{defn}{ \it (Muckenhoupt Weight) }
Let $v\in \R^{N}$ be a locally integrable function such that $0<v<\infty$ a.e. in $\R^{N}$. Then $v\in A_m$, that is, the Muckenhoupt class if there exists a positive constant $C_{m, v}$ depending on $m$ and $v$ such that for all balls $B\in \R^{N}$, we have 
$$
\Big(\frac{1}{|B|}\int_{B}v dx \Big) \Big(\frac{1}{|B|}\int_{B}v^{-\frac{1}{m-1}} dx \Big)^{m-1}\leq C_{m, v}.
$$ 
\end{defn}

\begin{defn}{ \it (Weighted Sobolev Space) }
For any $v\in \R^{N}$, we denote the weighted Sobolev space by $W^{1, m}(\R^{N}, v)$ and is defined as 
$$
W^{1, m}(\R^{N}, v)= \{u: \R^{N}\to \R \;\; {\rm measurable}: \;\; ||u||_{1,m,v}< \infty \},
$$
with respect to the norm
\begin{equation}\label{w1}
||u||_{1,m,v}= \Big( \int_{\R^N}|u(x)|^{m}v(x) dx +\int_{\R^N}|\nabla u|^{m}v(x) dx \Big)^{\frac{1}{m}}.
\end{equation}
And the space $X=  W^{1, m}_{0}(\R^{N}, v)$ is the closure of $(C_{c}^{\infty}(\R^{N}), ||.||_{1,m,v})$ with respect to the norm
\begin{equation}\label{w1}
||u||_{X}= \Big(\int_{\R^N}|\nabla u|^{m}v(x) dx \Big)^{\frac{1}{m}}.
\end{equation}
\end{defn}

\begin{defn}{ \it (Subclass of $A_m$) }
Let us denote the subclass of $A_m$ by $A_p$ and define $A_p$ as
$$
A_p= \Big\{ v\in A_m: \;\; v^{-p}\in L^{1}(\R^{N}) \quad{\rm for \; some }\;\; p\in [\frac{1}{m-1}, \infty)\cap \big(\frac{N}{m}, \infty \big) \Big\}.
$$
\end{defn}

\begin{defn}{\it (Weighted Morrey space) }
Assume $1< m< \infty$, 	$r> 0$ and $v\in A_m$. Then $u\in L^{m, r}(\R^{N}, v)$- the weighted Morrey space, if $u\in L^{m}(\R^{N}, v)$, where 
$$
L^{m}(\R^{N}, v)= \Big\{u: \R^{N}\to \R \;\; measurable: \;\; \int_{\R^N}v(x)|u|^m dx< \infty \Big\},
$$
and 
$$
||u||_{L^{m, r}(\R^{N}, v)}= \sup_{x\in \R^{N}, R> 0}\Big(L\int_{B(x, R)}v(y)|u(y)|^m dy \Big)^{\frac{1}{m}}< \infty,
$$
where $L= \frac{R^r}{\int_{B(x, R)}v(x) dx}$ and $B(x, R)$ is the ball centered at $x$ and radius $R$.
\end{defn}

Throughout this paper, we have the following assumption on the weight function $v(x)$:
\begin{itemize}
\item For $1\leq m_p\leq N$, $v\in A_p$ and 
$$
\frac{1}{v}\in L^{t, mN-\eta t(m-1)}(\R^N, v),
$$
where $t> N$ and $0< \eta< {\rm min}\{1, \frac{mN}{t(m-1)}\}$.
\end{itemize}

\medskip

Next, let us define the functional space
$$
X_{v}(\R^{N})= \Big\{u\in X: \int_{\R^{N}}V(x)|u|^m< \infty \Big\},
$$
endowed with the norm
$$
\|u\|_{X_{v}}=\Big[\int_{\R^N}v(x)|\nabla u|^m +\int_{\R^{N}}V(x)|u|^m \Big]^{\frac{1}{m}}.
$$
Throughout this paper, assume that $b$ satisfies 
\begin{equation}\label{p}
\frac{mp(2N-2\alpha-\theta)}{2N(p+1)}< b< \frac{mp(2N-2\alpha-\theta)}{2N+2p(N-m)},
\end{equation}
or
\begin{equation}\label{p1}
\frac{2N-2\alpha-\theta}{2N}< b< \infty,
\end{equation}
and $c$ satisfies
\begin{equation}\label{q}
\frac{mp(2N-2\beta-\gamma)}{2N(p+1)}< c< \frac{mp(2N-2\beta-\gamma)}{2N+2p(N-m)},
\end{equation}
or
\begin{equation}\label{q1}
\frac{2N-2\beta-\gamma}{2N}< c< \infty.
\end{equation} 

We also need the following double weighted Hardy-Littlewood-Sobolev inequality by Stein and Weiss(see \cite{SW1958})
\begin{equation}\label{hli}
\Big| \int_{\R^N} \Big(|x|^{-\delta}*\frac{u}{|x|^{\mu}}\Big)\frac{v}{|x|^{\mu}} \Big| \leq C\|u\|_p \|v\|_q,	
\end{equation}
for $\delta \in (0, N)$, $\mu \geq 0$, $u\in L^{p}(\R^N)$ and $v\in L^{q}(\R^N)$ such that
$$
1-\frac{1}{q}- \frac{\delta}{N}< \frac{\mu}{N}< 1-\frac{1}{q} \quad{ and }\;\;  \frac{1}{p}+\frac{1}{q}+\frac{\delta+2 \mu}{N}= 2.
$$

Define the energy functional 
${\mathcal L}_\lambda:X_{v}(\R^{N}) \rightarrow \R$ by
\begin{equation}\label{fr1}
\begin{aligned}
{\mathcal L}_\lambda(u)&=\frac{1}{m}\|u\|_{X_{v}}^{m}-\frac{1}{2b}\int_{\R^{N}}\Big({|x|^{-\theta}*\frac{|u|^{b}}{|x|^{\alpha}}}\Big)\frac{|u|^{b}}{|x|^{\alpha}}-\frac{\lambda}{2c}\int_{\R^{N}}\Big(|x|^{-\gamma}*\frac{|u|^{c}}{|x|^{\beta}}\Big)\frac{|u|^{c}}{|x|^{\beta}}.
\end{aligned}
\end{equation}
The energy functional ${\mathcal L}_\lambda$ is well defined by using \eqref{p} to \eqref{q1} together with the double weighted Hardy-Littlewood-Sobolev inequality \eqref{hli} and  moreover ${\mathcal L}_\lambda\in C^1(X_v)$. Any solution of \eqref{fr} is a critical point of the energy functional  ${\mathcal L}_\lambda$.
We first deal with the existence of groundstate solutions for the equation \eqref{fr} under the assumption that $V$ satisfies $(V1)$. To this aim, we shall be using a minimization method on the Nehari manifold associated with ${\mathcal L}_\lambda$, which is defined as 
\begin{equation}\label{nm}
{\mathcal N_\lambda}=\{u\in X_{v}(\R^N)\setminus\{0\}: \langle {\mathcal L}_\lambda'(u),u\rangle=0\}.
\end{equation}

The groundstate solutions will be obtained as minimizers of
$$
d_{\lambda}=\inf_{u\in {\mathcal N_\lambda}}{\mathcal L}_\lambda(u).
$$
Our main result regarding the existence of groundstate solutions is stated below.
\begin{theorem}\label{gstate}
Assume $N> m\geq 2$, $b>c> \frac{m}{2}$,  $\lambda> 0$, $\theta+2\alpha< N$,$\gamma+2\beta< N$. If $b$, $c$ satisfies \eqref{p} and \eqref{q} or if $b$, $c$ satisfy \eqref{p1} and \eqref{q1} and $V$ satisfies $(V1)$, then the equation \eqref{fr} has a groundstate solution $u\in X_{v}(\R^{N})$. 
\end{theorem}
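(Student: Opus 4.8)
The plan is to obtain the groundstate as a minimizer of $\mathcal{L}_\lambda$ over the Nehari manifold $\mathcal{N}_\lambda$ via the direct method, and then to promote a minimizing sequence to an actual critical point using compactness of the relevant embeddings. First I would record the fibering structure: for $u\in X_v(\R^N)\setminus\{0\}$, set $g_u(t)=\mathcal{L}_\lambda(tu)$ for $t>0$. Since $b>c>\tfrac m2\ge 1$, the two nonlocal terms scale like $t^{2b}$ and $t^{2c}$ with $2b>2c>m$, so $g_u(t)=\tfrac{t^m}{m}\|u\|_{X_v}^m - A t^{2b} - \lambda B t^{2c}$ with $A,B>0$ (here using $\lambda>0$); hence $g_u$ is positive for small $t$, tends to $-\infty$, and has a unique critical point $t(u)>0$ which is a strict global maximum. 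This shows $\mathcal{N}_\lambda$ is nonempty, that every ray meets it exactly once, and — differentiating $\langle\mathcal{L}_\lambda'(u),u\rangle$ along the fiber — that $0$ is a regular value, so $\mathcal{N}_\lambda$ is a $C^1$ manifold and constrained critical points are free critical points.

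Next I would establish that $d_\lambda=\inf_{\mathcal{N}_\lambda}\mathcal{L}_\lambda>0$ and that minimizing sequences are bounded. On $\mathcal{N}_\lambda$ one has $\langle\mathcal{L}_\lambda'(u),u\rangle=0$, i.e. $\|u\|_{X_v}^m = 2b\,I_\theta(u)+2c\lambda\, I_\gamma(u)$ where $I_\theta,I_\gamma$ denote the respective nonlocal quadratic forms; substituting into $\mathcal{L}_\lambda$ gives $\mathcal{L}_\lambda(u)=\big(\tfrac1m-\tfrac1{2c}\big)\|u\|_{X_v}^m + \big(\tfrac1{2c}-\tfrac1{2b}\big)\cdot 2b\,I_\theta(u)$, a sum of two nonnegative terms since $2c>m$ and $b>c$; combined with the Stein--Weiss inequality \eqref{hli} and the admissibility ranges \eqref{p}--\eqref{q1} (which, via the weighted Sobolev/Morrey embeddings for $v\in A_p$, bound $I_\theta(u)+I_\gamma(u)$ by a power of $\|u\|_{X_v}$ strictly larger than $m$), a standard argument gives a uniform lower bound $\|u\|_{X_v}\ge\rho>0$ on $\mathcal{N}_\lambda$, hence $d_\lambda>0$; and from the coercivity structure of $\mathcal{L}_\lambda$ on $\mathcal{N}_\lambda$ (the first displayed identity above bounds $\|u\|_{X_v}^m$ by a constant multiple of $\mathcal{L}_\lambda(u)$) any minimizing sequence $(u_n)$ is bounded in $X_v(\R^N)$.

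Then comes the compactness step, which I expect to be the main obstacle. Take a bounded minimizing sequence $(u_n)$; up to a subsequence $u_n\rightharpoonup u$ in $X_v(\R^N)$. Under hypothesis $(V1)$ the ambient space $X_v(\R^N)$ embeds continuously into $L^m(\R^N,V)$ and into the weighted Lebesgue spaces appearing on the right-hand side; the delicate point is that under $(V1)$ alone the embedding need not be compact, so I would either (a) work with a Lions-type concentration-compactness / vanishing dichotomy on the nonlocal terms, invoking that the problem is essentially translation-controlled because $V\ge V_0>0$ and the weights $|x|^{-\alpha},|x|^{-\beta},|x|^{-\theta},|x|^{-\gamma}$ break translation invariance at the origin, or (b) more robustly, pass to a symmetrized/rearranged minimizing sequence — since the nonlocal weighted terms with negative powers of $|x|$ increase under symmetric decreasing rearrangement (Riesz rearrangement inequality) while $\|u\|_{X_v}$ is controlled — to gain radial-type compactness of the embedding into the weighted $L^{2b}$- and $L^{2c}$-type spaces, forcing $I_\theta(u_n)\to I_\theta(u)$ and $I_\gamma(u_n)\to I_\gamma(u)$. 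Granting this convergence of the nonlocal terms together with weak lower semicontinuity of $\|\cdot\|_{X_v}$, the limit $u$ is nonzero (by the lower bound $\rho$ transferred through the nonlocal terms), lies on $\mathcal{N}_\lambda$ after the fiber rescaling $t(u)\le 1$, and satisfies $\mathcal{L}_\lambda(u)\le\liminf\mathcal{L}_\lambda(u_n)=d_\lambda$, hence equals $d_\lambda$ and is a minimizer. Finally, by the Lagrange multiplier / Nehari principle recorded in the first paragraph, $u$ is a critical point of $\mathcal{L}_\lambda$, i.e. a (groundstate) solution of \eqref{fr}, completing the proof. $\qquad\blacksquare$
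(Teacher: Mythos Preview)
Your setup of the Nehari manifold, the fibering argument, the coercivity and lower bound on $\mathcal N_\lambda$, and the passage from constrained to free critical points are all correct and coincide with the paper's Lemmas \ref{nehari1}--\ref{nehari3}. The genuine gap is the compactness step.

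Option (b) cannot work here: neither $V$ nor the Muckenhoupt weight $v$ is assumed radial, so symmetric decreasing rearrangement has no reason to decrease $\int_{\R^N} v(x)|\nabla u|^m$ (no P\'olya--Szeg\H{o}) or $\int_{\R^N} V(x)|u|^m$, and you cannot symmetrize the minimizing sequence without losing control of $\|\cdot\|_{X_v}$.

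Option (a) points in the right direction but misses the decisive mechanism. The paper does not extract compactness merely from the broken translation invariance of the Riesz weights. Instead it first upgrades the minimizing sequence to a $(PS)_{d_\lambda}$ sequence via Ekeland's principle, and then proves a profile-decomposition lemma (Lemma \ref{compact}): any $(PS)$ sequence for $\mathcal L_\lambda|_{\mathcal N_\lambda}$ either converges strongly or splits off finitely many translated profiles, each of which solves the \emph{unperturbed} limit problem \eqref{squ} (the $\lambda=0$ equation) and carries energy at least $d_{\mathcal I}=\inf_{\mathcal N_{\mathcal I}}\mathcal I$. The crucial ingredient you do not have is the strict energy comparison $d_\lambda<d_{\mathcal I}$ (Lemma \ref{flg}), obtained by projecting a groundstate $P$ of \eqref{squ} onto $\mathcal N_\lambda$ and using $\lambda>0$ together with $b>c>\tfrac m2$ to force the fibering parameter $t<1$. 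This strict inequality is what rules out the splitting alternative at level $d_\lambda$ and yields strong convergence. Without identifying the limit functional $\mathcal I$ and this energy threshold, a bare Lions dichotomy under only $(V1)$ will not close the argument.
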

We rely on the analysis of the Palais-Smale sequences for ${\mathcal L}_\lambda \!\mid_{\mathcal N_\lambda}$. We will show that any Palais-Smale sequence of ${\mathcal L}_\lambda \!\mid_{\mathcal N_\lambda}$ is either converging strongly to its weak limit or differs from it by a finite number of sequences, which are the translated solutions of \eqref{squ} by using ideas from \cite{CM2016,CV2010}. Here, we shall be relying on several weighted nonlocal Brezis-Lieb results which we have presented in Section $2$.

\medskip

Next, we study the least energy sign-changing solutions of \eqref{fr}. Now, we need $V$ to satisfy both the conditions $(V1)$ and $(V2)$. We use the minimization method on the Nehari nodal set defined as 
$$
{\cal \overline {N}_\lambda}= \Big\{u\in  X_{v}(\R^{N}): u^{\pm} \neq 0 \mbox{ and } \langle {\mathcal L}_\lambda'(u),u^{\pm}\rangle  \mbox{ = } 0 \Big\},
$$
and solutions will be obtained as minimizers for 
$$
\overline{d_\lambda}= \inf_{u\in {\cal \overline{N}}_{\lambda}}{\mathcal L}_\lambda(u).
$$
Here, we have
\begin{equation*}
\begin{aligned}
\langle {\mathcal L}_\lambda'(u),u^{\pm} \rangle& = \|u^{\pm}\|_{X_{v}}^{m}-\int_{\R^{N}}\Big(|x|^{-\theta}*\frac{(u^{\pm})^{b}}{|x|^\alpha}\Big)\frac{(u^{\pm})^{b}}{|x|^\alpha}-\lambda\int_{\R^{N}}\Big(|x|^{-\gamma}*\frac{(u^{\pm})^{c}}{|x|^\beta}\Big)\frac{(u^{\pm})^{c}}{|x|^\beta}\\
&- \int_{\R^{N}}\Big(|x|^{-\theta}*\frac{(u^{\pm})^{b}}{|x|^\alpha}\Big)\frac{(u^{\mp})^{b}}{|x|^\alpha}-\lambda\int_{\R^{N}}\Big(|x|^{-\gamma}*\frac{(u^{\pm})^{c}}{|x|^beta}\Big)\frac{(u^{\mp})^{c}}{|x|^\beta}.
\end{aligned}
\end{equation*}
 
\medskip
 
We now state our second main result in reference to the least energy sign-changing solutions.
\begin{theorem}\label{signc}
Let $N> m\geq 2$, $b>c> m$,  $\lambda \in \R$, $\theta+2\alpha< m$,$\gamma+2\beta< m$. If $b$, $c$ satisfies \eqref{p} and \eqref{q} or if $b$, $c$ satisfy \eqref{p1} and \eqref{q1} and $V$ satisfies both $(V1)$ and $(V1)$, then the equation \eqref{fr} has a least energy sign-changing solution $u\in X_{v}(\R^{N})$.
\end{theorem}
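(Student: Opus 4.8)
The plan is to obtain the sign-changing solution as a minimiser of $\mathcal L_\lambda$ over the nodal Nehari set $\overline{\mathcal N}_\lambda$, in the spirit of the constrained-minimisation schemes for the $m$-Laplacian and the Choquard equation, using the weighted nonlocal Br\'ezis--Lieb identities of Section~2 and the Stein--Weiss inequality~\eqref{hli} throughout.

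\textbf{The fibering picture.} Fix $u\in X_v(\mathbb R^N)$ with $u^{\pm}\neq0$ and set $h(s,t)=\mathcal L_\lambda(su^{+}+tu^{-})$ for $s,t\geq0$. Since $u^{+}$ and $u^{-}$ have disjoint supports, $\|su^{+}+tu^{-}\|_{X_v}^{m}=s^{m}\|u^{+}\|_{X_v}^{m}+t^{m}\|u^{-}\|_{X_v}^{m}$, while each of the two double integrals expands into a pure $s^{2b}$- (resp.\ $s^{2c}$-) term, a pure $t^{2b}$- (resp.\ $t^{2c}$-) term, and a coupled $s^{b}t^{b}$- (resp.\ $s^{c}t^{c}$-) cross term with nonnegative coefficient, by symmetry of the Riesz kernel and positivity of the integrands. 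I would prove that the system
\[
\langle \mathcal L_\lambda'(su^{+}+tu^{-}),su^{+}\rangle=\langle \mathcal L_\lambda'(su^{+}+tu^{-}),tu^{-}\rangle=0
\]
has a solution $(s_u,t_u)\in(0,\infty)^{2}$ at which $h$ attains its maximum over $[0,\infty)^{2}$: because $m<c<b$, the $s^{m},t^{m}$ terms are of strictly lower homogeneity than every nonlinear term, so each slice of $h$ increases near the origin and tends to $-\infty$, which forces an interior maximum and, by a Poincar\'e--Miranda argument, a zero of the system; monotonicity of $\sigma\mapsto\sigma^{2b-m},\sigma^{2c-m}$ and the sign of the cross coefficients give uniqueness and the strict-maximum property. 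This yields $\overline{\mathcal N}_\lambda\neq\emptyset$ and the characterisation $\overline{d_\lambda}=\inf_{u^{\pm}\neq0}\max_{s,t\geq0}h(s,t)$.

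\textbf{A priori estimates.} For $u\in\overline{\mathcal N}_\lambda$ one has $\langle\mathcal L_\lambda'(u),u\rangle=0$, so subtracting $\tfrac1{2c}\langle\mathcal L_\lambda'(u),u\rangle$ gives
\[
\mathcal L_\lambda(u)=\Big(\tfrac1m-\tfrac1{2c}\Big)\|u\|_{X_v}^{m}+\Big(\tfrac1{2c}-\tfrac1{2b}\Big)\int_{\mathbb R^N}\Big(|x|^{-\theta}*\tfrac{|u|^{b}}{|x|^{\alpha}}\Big)\tfrac{|u|^{b}}{|x|^{\alpha}},
\]
both coefficients positive since $m<2c<2b$ and the double integral nonnegative; hence $\mathcal L_\lambda\!\mid_{\overline{\mathcal N}_\lambda}$ is coercive and $\overline{d_\lambda}\geq0$ for every $\lambda\in\mathbb R$. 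Reading the Nehari identity for $u^{\pm}$ together with \eqref{hli}, the growth conditions \eqref{p}--\eqref{q1}, and nonnegativity of the cross terms, I would derive a uniform bound $\|u^{\pm}\|_{X_v}\geq\rho>0$ on $\overline{\mathcal N}_\lambda$, which combined with the display yields $\overline{d_\lambda}>0$.

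\textbf{Attainment and the solution property.} Let $(u_n)\subset\overline{\mathcal N}_\lambda$ be minimising, bounded in $X_v$ by coercivity. Since $V$ satisfies $(V1)$ and $(V2)$, the embedding of $X_v(\mathbb R^N)$ into the Lebesgue spaces occurring in the nonlocal terms is compact --- the weighted analogue of the classical compactness for Schr\"odinger operators with a coercive potential, which is exactly where $(V2)$ enters --- so along a subsequence $u_n\rightharpoonup u$, $u_n^{\pm}\rightharpoonup u^{\pm}$, the double integrals converge, and $\|u_n^{\pm}\|_{X_v}\geq\rho$ forces $u^{\pm}\neq0$. With $w=s_u u^{+}+t_u u^{-}\in\overline{\mathcal N}_\lambda$, weak lower semicontinuity of $\|\cdot\|_{X_v}^{m}$, maximality of $h$ at $(s_u,t_u)$, and $(s_{u_n},t_{u_n})=(1,1)$ give
\[
\overline{d_\lambda}\leq\mathcal L_\lambda(w)\leq\liminf_n\mathcal L_\lambda(s_u u_n^{+}+t_u u_n^{-})\leq\liminf_n\mathcal L_\lambda(u_n)=\overline{d_\lambda},
\]
so $w$ realises $\overline{d_\lambda}$. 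Finally I would show $w$ is a free critical point of $\mathcal L_\lambda$: if $\mathcal L_\lambda'(w)\neq0$, deform $w$ slightly along a descent direction to $w_\varepsilon$, apply the rescaling $(s,t)\mapsto sw_\varepsilon^{+}+tw_\varepsilon^{-}$ and a Brouwer-degree argument near $(1,1)$ to return to $\overline{\mathcal N}_\lambda$ at energy below $\overline{d_\lambda}$, a contradiction; a cut-off argument then makes $w$ a weak solution of \eqref{fr} with $w^{\pm}\neq0$. I expect the real obstacles to be twofold: first, the solvability, uniqueness and strict-maximum property of the fibering system --- unlike the local case the cross terms $\int_{\mathbb R^N}(|x|^{-\theta}*\tfrac{(u^{+})^{b}}{|x|^{\alpha}})\tfrac{(u^{-})^{b}}{|x|^{\alpha}}$ and their $c$-analogue do not vanish, they couple $u^{+}$ with $u^{-}$, and for $\lambda<0$ the indefinite sign of the $c$-term makes the relevant one-variable maps non-monotone, so monotonicity must be recovered within the regime $m<c<b$ with care; second, establishing the compact embedding with the Muckenhoupt weight $v$ and the Morrey-type hypothesis on $1/v$, which is the linchpin making the nonlocal functionals sequentially weakly continuous and thus keeping $u^{\pm}\neq0$ in the limit.
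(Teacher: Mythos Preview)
Your proposal follows the same overall architecture as the paper: a fibering lemma to populate $\overline{\mathcal N}_\lambda$ and to identify minimisers as fibre-maxima, the coercivity/positivity estimate via $\mathcal L_\lambda(u)-\tfrac{1}{2c}\langle\mathcal L_\lambda'(u),u\rangle$, compactness from $(V1)$--$(V2)$ to pass to the limit while keeping $u^{\pm}\neq0$, and a deformation plus Brouwer-degree argument to upgrade the constrained minimiser to a free critical point. So the strategy is correct and essentially the paper's.

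The one genuine technical difference is in the fibering step. You propose to locate $(s_u,t_u)$ by a Poincar\'e--Miranda argument and then extract uniqueness and the strict-maximum property from monotonicity of $\sigma\mapsto\sigma^{2b-m},\sigma^{2c-m}$, and you rightly flag that for $\lambda<0$ the indefinite sign of the $c$-term makes this delicate. The paper avoids this entirely by the substitution $(s,t)=(\tau^{1/(2b)},\delta^{1/(2b)})$: setting $\varphi(\tau,\delta)=\mathcal L_\lambda(\tau^{1/(2b)}u^{+}+\delta^{1/(2b)}u^{-})$, the leading $b$-term becomes linear in $(\tau,\delta)$ and every remaining power exponent is strictly below $1$, so $\varphi$ is (asserted to be) strictly concave on $[0,\infty)^2$; a unique interior maximum then follows at once from the boundary behaviour $\partial_\tau\varphi\to+\infty$ as $\tau\searrow0$ and $\varphi\to-\infty$ along each ray. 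This substitution is what buys a clean, sign-of-$\lambda$-independent uniqueness statement and is worth adopting in place of the monotonicity route.

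One small correction: you invoke the weighted nonlocal Brezis--Lieb identities of Section~2, but the paper does not use them here. Because the embeddings $X_v(\mathbb R^N)\hookrightarrow L^{2Nb/(2N-2\alpha-\theta)}$ and $X_v(\mathbb R^N)\hookrightarrow L^{2Nc/(2N-2\beta-\gamma)}$ are compact under $(V1)$--$(V2)$, one gets $u_n^{\pm}\to u^{\pm}$ strongly in those Lebesgue spaces, and then convergence of every nonlocal integral (including the cross terms) follows directly from the Stein--Weiss inequality~\eqref{hli}; no splitting lemma is needed.
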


\medskip

Rest of the paper is organized as follows. In Section $2$ we collect some preliminary results on Sobolev embeddings and  weighted nonlocal versions of the Brezis-Lieb lemma which will be crucial to our investigation of groundstate solutions of \eqref{fr}. Section $3$ and $4$ consists of the proofs of our main results.

\bigskip

\section{Preliminary results}

\begin{lemma}\label{se}(\cite{AA2011}, \cite{DKN1997}, \cite{G2018})
For any $v\in A_p$, the inclusion map
$$
X_v \hookrightarrow W_{0}^{1, m_p}(\R^N) \hookrightarrow \left\{ 
\begin{aligned}
L^{s}(\R^N), & \quad{ for }\;\; m_p\leq s\leq m_{p}^{*}, && \quad{ when }\;\; 1\leq m_p< N,\\
L^{s}(\R^N), & \quad{ for }\;\; 1\leq s< \infty, && \quad{ when }\;\; m_p= N,
\end{aligned} 
\right.
$$
is continuous, where $m_p= \frac{mp}{p+1}$ and $m_{p}^{*}= \frac{Nm_p}{N-m_p}$. Here, $m_{p}^{*}$ is called the critical Sobolev exponent. Moreover, the embeddings are compact except when $s=m_p^{*}$ in case of $1\leq m_p< N$.
\end{lemma}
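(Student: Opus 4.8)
The plan is to read the displayed chain as a composition of two separate embeddings and to treat them independently: the inclusion $X_v \hookrightarrow W_0^{1,m_p}(\R^N)$ is where the Muckenhoupt weight does all the work, while $W_0^{1,m_p}(\R^N)\hookrightarrow L^s(\R^N)$ is the classical unweighted Sobolev embedding. Recalling that the norm on $X_v$ (and on $X$) is the homogeneous gradient norm, the first inclusion amounts to controlling only $\|\nabla u\|_{L^{m_p}(\R^N)}$ by $\|u\|_{X_v}$, after which the statement reduces to quoting the Gagliardo--Nirenberg--Sobolev theorem and Rellich--Kondrachov.

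For the first inclusion I would estimate the gradient directly by H\"older's inequality against the weight. Writing $|\nabla u|^{m_p}=\big(|\nabla u|^{m_p}v^{m_p/m}\big)\,v^{-m_p/m}$ and applying H\"older with conjugate exponents $m/m_p$ and $m/(m-m_p)$ gives
$$
\int_{\R^N}|\nabla u|^{m_p}\,dx \le \Big(\int_{\R^N}|\nabla u|^{m}v\,dx\Big)^{m_p/m}\Big(\int_{\R^N}v^{-m_p/(m-m_p)}\,dx\Big)^{(m-m_p)/m}.
$$
Since $m_p=\frac{mp}{p+1}$ one computes $m-m_p=\frac{m}{p+1}$ and hence $\frac{m_p}{m-m_p}=p$, so the second factor is exactly $\big(\int_{\R^N}v^{-p}\big)^{(m-m_p)/m}$, which is finite precisely because $v\in A_p$ forces $v^{-p}\in L^1(\R^N)$. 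Taking $m_p$-th roots and using $\big(\int_{\R^N}|\nabla u|^m v\big)^{1/m}\le\|u\|_{X_v}$ yields $\|\nabla u\|_{L^{m_p}}\le \|v^{-p}\|_{L^1}^{1/(mp)}\,\|u\|_{X_v}$, the desired continuous embedding. The exponent $m_p=\frac{mp}{p+1}$ is chosen exactly so that the dual weight collapses onto the integrable power $v^{-p}$; this is the only place the full strength of $v\in A_p$, rather than merely $v\in A_m$, is used.

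Once $u\in W_0^{1,m_p}(\R^N)$, the classical Sobolev inequality gives $u\in L^{m_p^*}(\R^N)$ with $\|u\|_{m_p^*}\le C\|\nabla u\|_{m_p}$ when $m_p<N$, and membership in every $L^s$, $1\le s<\infty$, when $m_p=N$. The intermediate exponents are covered by combining this critical Sobolev bound with the lower-order integrability that $(V1)$ supplies through $V_0\int_{\R^N}|u|^m\le\int_{\R^N}V|u|^m\le\|u\|_{X_v}^m$, and then interpolating between the resulting Lebesgue bounds; on bounded balls all $s\le m_p^*$ are reached at once, so only the behaviour at infinity requires genuine attention. Compactness for the subcritical range is then produced by the standard truncation argument: on a ball $B_R$ a bounded sequence in $X_v$ has a subsequence converging strongly in $L^s(B_R)$ by Rellich--Kondrachov, while the tails $\int_{\R^N\setminus B_R}|u|^s$ are forced to be uniformly small by the prescribed integrability of $1/v$ encoded in the weighted Morrey hypothesis $\tfrac1v\in L^{t,\,mN-\eta t(m-1)}(\R^N,v)$ with $t>N$.

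The main obstacle, and the only non-routine point, is this last uniform tail estimate that upgrades the local Rellich--Kondrachov compactness to compactness on all of $\R^N$: on an unbounded domain the embedding is otherwise destroyed by translation invariance, so one must quantify how the decay/integrability of $1/v$ (through the Morrey exponent $mN-\eta t(m-1)$) prevents $L^s$-mass from escaping to infinity. Compactness necessarily fails at the critical exponent $s=m_p^*$, since the scale-invariant family $u_\lambda(x)=\lambda^{(N-m_p)/m_p}u(\lambda x)$ has constant $\|\nabla u_\lambda\|_{m_p}$ and constant $\|u_\lambda\|_{m_p^*}$ yet concentrates as $\lambda\to\infty$ with no $L^{m_p^*}$-convergent subsequence; this is exactly the exceptional case excluded in the statement. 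As the three cited references establish precisely these weighted embedding and tail estimates, I would invoke them for the quantitative Morrey bound and present the H\"older computation above as the self-contained core of the argument.
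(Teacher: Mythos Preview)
The paper does not prove this lemma at all: it is stated with citations to \cite{AA2011}, \cite{DKN1997}, \cite{G2018} and used as a black box throughout, so there is no ``paper's own proof'' to compare against. Your sketch therefore goes well beyond what the paper does, and the H\"older computation you give for the first embedding $X_v\hookrightarrow W_0^{1,m_p}(\R^N)$ is exactly the standard argument in the cited references: the identity $m_p/(m-m_p)=p$ is precisely what makes the exponent $m_p=\tfrac{mp}{p+1}$ the right one, and your derivation of it is clean and correct.

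One small point to tighten if you flesh this out: your interpolation for the intermediate exponents uses $(V1)$ to get $u\in L^m(\R^N)$ and then interpolates against $L^{m_p^*}$, but since $m_p<m$ this only covers $s\in[m,m_p^*]$, not the full range $s\in[m_p,m_p^*]$ claimed in the lemma. For $s\in[m_p,m)$ one either needs an additional argument (for instance, combining the $L^m$ bound with the tail control coming from the Morrey hypothesis to reach lower exponents) or one simply accepts that the paper's later applications only ever use $s\ge m$ anyway. This is a technical wrinkle rather than a conceptual gap, and the cited sources handle it; your identification of the Morrey condition on $1/v$ as the mechanism that restores compactness on the whole space, and of scaling as the obstruction at $s=m_p^*$, is on target.
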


\begin{lemma}\label{cc}(\cite[Lemma 1.1]{L1984}, \cite[Lemma 2.3]{MV2013})
There exists a constant $C>0$ such that for any $u\in X_{v}(\R^{N})$ we have
$$
\int_{\R^N}|u|^r \leq C||u||\Big(\sup_{y\in \R^N} \int_{B_1(y)}|u|^r \Big)^{1-\frac{2}{r}},
$$
where $r\in [m_p,m_{p}^*]$.
\end{lemma}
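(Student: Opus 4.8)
The plan is to prove the estimate by the classical Lions covering argument, after first reducing from the weighted space to the unweighted one. By Lemma~\ref{se} the inclusion $X_v \hookrightarrow W^{1,m_p}_0(\R^N)$ is continuous, so $\|u\|_{W^{1,m_p}(\R^N)} \le C\|u\|$; hence it suffices to prove the inequality with the unweighted Sobolev norm in place of $\|u\|$, and from now on I would work in $W^{1,m_p}_0(\R^N)$. The advantage is that this space is translation invariant, so every local constant produced below will be independent of the center of the ball, which is exactly what allows the final supremum to be taken uniformly in $y$.

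First I would fix a covering of $\R^N$ by unit balls $\{B_1(y_i)\}_{i}$ with centers on the integer lattice, chosen so that each point of $\R^N$ belongs to at most a fixed number $k=k(N)$ of them (bounded overlap). Summing over this cover gives $\int_{\R^N}|u|^r \le \sum_i \int_{B_1(y_i)}|u|^r$. The key step is then to split each local integral and extract the supremum: since $r\in[m_p,m_p^*]$ one has $1-\frac{m_p}{r}\in[0,1)$, and writing
$$
\int_{B_1(y_i)}|u|^r=\Big(\int_{B_1(y_i)}|u|^r\Big)^{1-\frac{m_p}{r}}\Big(\int_{B_1(y_i)}|u|^r\Big)^{\frac{m_p}{r}}
\le \Big(\sup_{y\in\R^N}\int_{B_1(y)}|u|^r\Big)^{1-\frac{m_p}{r}}\Big(\int_{B_1(y_i)}|u|^r\Big)^{\frac{m_p}{r}}.
$$

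To control the remaining sum $\sum_i\big(\int_{B_1(y_i)}|u|^r\big)^{m_p/r}$, I would invoke the Sobolev embedding $W^{1,m_p}(B_1(y_i))\hookrightarrow L^r(B_1(y_i))$, valid precisely because $m_p\le r\le m_p^*$ and with constant independent of $i$ by translation invariance. This yields $\big(\int_{B_1(y_i)}|u|^r\big)^{m_p/r}=\|u\|_{L^r(B_1(y_i))}^{m_p}\le C\int_{B_1(y_i)}\big(|\nabla u|^{m_p}+|u|^{m_p}\big)$. Summing over $i$ and using the bounded overlap of the cover, $\sum_i\int_{B_1(y_i)}(|\nabla u|^{m_p}+|u|^{m_p})\le k\,\|u\|_{W^{1,m_p}(\R^N)}^{m_p}$, and combining the two displays closes the argument.

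The step that needs the most care — and that I expect to be the main obstacle — is the bookkeeping of exponents together with the uniformity of constants: one must verify that the homogeneity of the estimate forces the power $1-\frac{m_p}{r}$ on the supremum (which reduces to the displayed $1-\frac2r$ in the regime $m_p=2$, with the norm entering to its natural power $m_p$), and that routing the argument through Lemma~\ref{se} into the translation-invariant unweighted space is precisely what guarantees a single constant $C$ and a single overlap number $k$ valid simultaneously for every ball center. Everything else is the routine covering-and-summing mechanism.
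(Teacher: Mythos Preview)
The paper does not supply its own proof of this lemma: it is stated with citations to \cite{L1984} and \cite{MV2013} and then used without argument. Your covering argument is precisely the standard Lions proof, and routing it through Lemma~\ref{se} to pass from $X_v$ to the translation-invariant space $W^{1,m_p}_0(\R^N)$ is the right way to handle the weight; there is nothing in the paper to compare against beyond the bare citations.

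Your final paragraph is the important one, and you have correctly identified a genuine issue in the \emph{statement} rather than in your proof. The inequality as printed is not scale-homogeneous: under $u\mapsto \lambda u$ the left side scales like $\lambda^r$ while the right side scales like $\lambda^{1+r(1-2/r)}=\lambda^{r-1}$. The cited result in \cite{MV2013} is for $H^1$, where the norm appears squared and the exponent is $1-\tfrac{2}{r}$; the natural $m_p$-version, which is exactly what your argument produces, has $\|u\|^{m_p}$ together with the exponent $1-\tfrac{m_p}{r}$. So your proof is correct and yields the homogeneity-consistent inequality
\[
\int_{\R^N}|u|^r \;\le\; C\,\|u\|_{X_v}^{\,m_p}\Big(\sup_{y\in\R^N}\int_{B_1(y)}|u|^r\Big)^{1-\frac{m_p}{r}},
\]
which is what is actually needed downstream (the lemma is only invoked qualitatively, to conclude $L^r$-vanishing from vanishing of the supremum, so the precise exponents do not affect the rest of the paper). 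In short: your argument is sound; the discrepancy you flagged is a typographical slip in the lemma as stated.
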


\begin{lemma}\label{bogachev}(\cite[Proposition 4.7.12]{B2007})
Let $(z_n)$ be a bounded sequence in $L^r(\R^N)$ for some $r\in (1,\infty)$ which converges to $z$ almost everywhere. Then $w_n\rightharpoonup w$ weakly in $L^r(\R^N)$.
\end{lemma}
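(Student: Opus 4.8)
The statement as typeset mixes two sets of symbols; the evident intended conclusion is $z_n\rightharpoonup z$ weakly in $L^r(\R^N)$, and that is what I would prove. The plan is the classical one: upgrade the pointwise (a.e.) convergence to weak convergence using reflexivity of $L^r$ together with the uniform integrability forced by $r>1$. Write $M=\sup_n\|z_n\|_{L^r}<\infty$ and let $r'=r/(r-1)$ be the conjugate exponent, so that $r'\in(1,\infty)$ and $(L^r)^*=L^{r'}$.

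First I would record that the limit lies in the right space: since $z_n\to z$ a.e.\ and $\|z_n\|_{L^r}\le M$, Fatou's lemma gives $\|z\|_{L^r}\le\liminf_n\|z_n\|_{L^r}\le M$, so $z\in L^r(\R^N)$ and the functional $\varphi\mapsto\int_{\R^N}z\varphi$ is well defined on $L^{r'}$. The linear functionals $T_n(\varphi)=\int_{\R^N}z_n\varphi$ are uniformly bounded on $L^{r'}$, with $\|T_n\|\le M$ by H\"older's inequality. Consequently, to obtain $T_n(\varphi)\to\int_{\R^N}z\varphi$ for every $\varphi\in L^{r'}$ it suffices, by a standard $3\varepsilon$ argument exploiting this uniform bound, to verify the convergence for $\varphi$ ranging over a dense subset of $L^{r'}$.

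For the dense subset I would take finite linear combinations of indicator functions $\chi_E$ of sets $E$ of finite Lebesgue measure, which are dense in $L^{r'}$ precisely because $r'<\infty$. Fix such an $E$. The key point is uniform integrability of $(z_n)$ over $E$: for any measurable $A\subset E$, H\"older's inequality gives $\int_A|z_n|\le\|z_n\|_{L^r}|A|^{1/r'}\le M|A|^{1/r'}$, which tends to $0$ as $|A|\to0$ uniformly in $n$, again because $r>1$ forces $r'<\infty$. Since in addition $z_n\to z$ a.e.\ on the finite-measure set $E$, Vitali's convergence theorem yields $z_n\to z$ in $L^1(E)$, whence $\int_{\R^N}z_n\chi_E\to\int_{\R^N}z\chi_E$. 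By linearity this establishes the convergence on the dense subset, and the reduction of the previous paragraph then upgrades it to all $\varphi\in L^{r'}$, i.e.\ $z_n\rightharpoonup z$ in $L^r(\R^N)$.

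The only genuinely substantive point---the main obstacle---is the passage from a.e.\ convergence to $L^1_{\mathrm{loc}}$ convergence tested against indicators, which is exactly where the hypothesis $r>1$ enters (through uniform integrability and Vitali); the reflexivity and density bookkeeping is routine. As an alternative route I would mention the combination of Mazur's lemma with the subsequence principle: every subsequence is bounded, hence admits a weakly convergent further subsequence, whose weak limit must coincide a.e.\ with $z$ (identify it via strongly convergent convex combinations, which inherit the a.e.\ limit $z$); since the limit is forced to be $z$ independently of the subsequence, the full sequence converges weakly to $z$.
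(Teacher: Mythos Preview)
Your argument is correct. You rightly interpret the evident typo ($w_n\rightharpoonup w$ should read $z_n\rightharpoonup z$), and your two-step proof --- Fatou to place $z$ in $L^r$, then uniform integrability plus Vitali on finite-measure sets to obtain convergence against a dense family of test functions, upgraded to all of $L^{r'}$ by the uniform norm bound --- is the standard and complete route. The alternative sketch via weak sequential compactness and identification of the limit through Mazur's lemma is also valid.

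For comparison: the paper itself supplies no proof of this lemma. It is stated with a direct citation to Bogachev's measure-theory text (Proposition~4.7.12) and used as a black box in the proofs of the weighted nonlocal Brezis--Lieb lemmas that follow. So your proposal does not compete with an argument in the paper; it fills in what the paper imports from the literature.
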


\begin{lemma}\label{blc}{\sf (\it Local Brezis-Lieb lemma)}
Let $(z_n)$ be a bounded sequence in $L^r(\R^N)$ for some $r\in (1,\infty)$ which converges to $z$ almost everywhere. Then, for every $q\in [1,r]$ we have
$$
\lim_{n\to\infty}\int_{\R^N}\big| |z_n|^q-|z_n-z|^q-|z|^q\big|^{\frac{r}{q}}=0\,,
$$
and
$$
\lim_{n\to\infty}\int_{\R^N}\big| |z_n|^{q-1}z_n-|z_n-z|^{q-1}(z_n-z)-|z|^{q-1}z\big|^{\frac{r}{q}}=0.
$$
\end{lemma}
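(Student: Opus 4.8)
The plan is to reduce both identities to the classical truncation argument of Brezis and Lieb. First I would write $w_n := z_n - z$, so that $(w_n)$ stays bounded in $L^r(\R^N)$ with $M := \sup_n \|w_n\|_r^r < \infty$ and $w_n \to 0$ almost everywhere; note also that $z\in L^r(\R^N)$ by Fatou's lemma. With this notation the two integrands become
$$f_n := \big| |w_n+z|^q - |w_n|^q - |z|^q\big|^{r/q} \quad\text{and}\quad \widetilde f_n := \big| |w_n+z|^{q-1}(w_n+z) - |w_n|^{q-1}w_n - |z|^{q-1}z\big|^{r/q},$$
and since $w_n \to 0$ a.e. both $f_n \to 0$ and $\widetilde f_n \to 0$ almost everywhere. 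The difficulty is that neither $|w_n|^r$ nor $|w_n|^q$ need be dominated by a fixed integrable function, so one cannot apply dominated convergence directly; the truncation trick circumvents exactly this.

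The elementary ingredient I would use is that, for $q \ge 1$, the maps $t \mapsto |t|^q$ and $t \mapsto |t|^{q-1}t$ are $C^1$ on $\R$, so by the mean value theorem $\big||a+b|^q - |a|^q\big| \le q(|a|+|b|)^{q-1}|b|$ and likewise for the signed version; splitting $(|a|+|b|)^{q-1} \le 2^{q-1}(|a|^{q-1}+|b|^{q-1})$ and applying Young's inequality to $|a|^{q-1}|b|$, one gets that for every $\varepsilon > 0$ there is $C_\varepsilon > 0$ with
$$\big| |a+b|^q - |a|^q \big| \le \varepsilon|a|^q + C_\varepsilon|b|^q \quad\text{and}\quad \big| |a+b|^{q-1}(a+b) - |a|^{q-1}a \big| \le \varepsilon|a|^q + C_\varepsilon|b|^q$$
for all $a,b \in \R$. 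Taking $a = w_n(x)$, $b = z(x)$, adding $|z|^q$ to the right-hand side, raising to the power $r/q \ge 1$ (here $q \le r$ is used) and using $(x+y)^{r/q} \le 2^{r/q-1}(x^{r/q}+y^{r/q})$ yields the pointwise bound $f_n \le \varepsilon^{r/q}2^{r/q-1}|w_n|^r + D_\varepsilon|z|^r$ (and identically for $\widetilde f_n$) with $D_\varepsilon$ depending only on $\varepsilon, q, r$.

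Finally I would run the truncation. Setting $G_{n,\varepsilon} := \big(f_n - \varepsilon^{r/q}2^{r/q-1}|w_n|^r\big)^+$, the bound above gives $0 \le G_{n,\varepsilon} \le D_\varepsilon|z|^r \in L^1(\R^N)$, while $G_{n,\varepsilon} \to 0$ a.e. since $f_n \to 0$ and $|w_n|^r \to 0$ a.e.; hence $\int_{\R^N} G_{n,\varepsilon} \to 0$ by dominated convergence. From $f_n \le G_{n,\varepsilon} + \varepsilon^{r/q}2^{r/q-1}|w_n|^r$ we conclude $\limsup_{n\to\infty}\int_{\R^N} f_n \le \varepsilon^{r/q}2^{r/q-1}M$, and letting $\varepsilon \downarrow 0$ gives $\int_{\R^N} f_n \to 0$; the same reasoning applied to $\widetilde f_n$ proves the second limit. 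The only point requiring any care is the verification of the two elementary inequalities uniformly in $a,b$, which is precisely where $q \ge 1$ enters; everything else is bookkeeping.
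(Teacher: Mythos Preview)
Your proof is correct and follows essentially the same truncation argument as the paper: both establish the pointwise inequality $\big||a+b|^q-|a|^q\big|^{r/q}\le \varepsilon|a|^r+C(\varepsilon)|b|^r$, subtract off the $\varepsilon|w_n|^r$ part, dominate the positive remainder by $D_\varepsilon|z|^r$, apply dominated convergence, and then send $\varepsilon\to 0$. Your write-up is in fact a bit more complete, since you derive the elementary inequality via the mean value theorem and Young's inequality and you treat the signed identity $|z_n|^{q-1}z_n-|z_n-z|^{q-1}(z_n-z)-|z|^{q-1}z$ explicitly, whereas the paper only spells out the first statement.
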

\begin{proof}
Let us fix $\varepsilon>0$, then there exists $C(\varepsilon)>0$ such that for all $g$,$h\in \R$ we have
\begin{equation}\label{lb1}
\Big||g+h|^{q}-|g|^{q}\big|^{\frac{r}{q}}\leq \varepsilon|g|^{r}+C(\varepsilon)|h|^{r}.
\end{equation}
By equation \eqref{lb1}, one could obtain
$$
\begin{aligned}
|f_{n, \varepsilon}|=& \Big( \Big||z_{n}|^{q}-|z_{n}-z|^{q}-|z^{q}|\Big|^{\frac{r}{q}}-\varepsilon|z_{n}-z|^{r}\Big)^{+}\\
&\leq (1+C(\varepsilon))|z|^{r}.
\end{aligned}
$$
Next, by Lebesgue Dominated Convergence theorem, we get
\begin{equation}
\intr {f_{n, \varepsilon}} \rightarrow 0  \quad\mbox{ as } n\rightarrow \infty.
\end{equation}
Hence, we deduce that
$$
\Big||z_{n}|^{q}-|z_{n}-z|^{q}-|z|^{q}\Big|^{\frac{r}{q}}\leq f_{n, \varepsilon}+\varepsilon|z_{n}-z|^{r},
$$
and this further gives 
$$
\limsup_{n\rightarrow \infty} {\intr \Big||z_{n}|^{q}-|z_{n}-z|^{q}-|z|^{q}\Big|^{\frac{r}{q}}}\leq c\varepsilon,
$$
where $c= \sup_{n}|z_{n}-z|_{r}^{r}< \infty$. In order to conclude our proof, we let $\varepsilon \rightarrow 0$.
\end{proof}

\begin{lemma}\label{nlocbl}{\sf (\it Weighted Nonlocal Brezis-Lieb lemma}(\cite[Lemma 2.4]{MV2013})
Let $N\geq 3$, $\alpha\geq 0$, $\theta\in (0,N)$, $\theta+2\alpha< N$ and $b\in [1,\frac{2N}{2N-2\alpha-\theta})$. Assume $(u_n)$ is a bounded sequence in $L^{\frac{2Nb}{2N-2\alpha-\theta}}(\R^N)$ such that $u_n \rightarrow u$ almost everywhere in $\R^N$. Then
$$
\int_{\R^N}\Big(|x|^{-\theta}*\frac{|u_n|^{b}}{|x|^{\alpha}}\Big)\frac{|u_n|^b}{|x|^\alpha}dx-\int_{\R^N}\Big(|x|^{-\theta}*\frac{|u_n-u|^b}{|x|^{\alpha}}\Big)\frac{|u_n-u|^b}{|x|^\alpha}dx \rightarrow \int_{\R^N}\Big(|x|^{-\theta}*\frac{|u|^b}{|x|^\alpha}\Big)\frac{|u|^b}{|x|^\alpha}dx.
$$
\end{lemma}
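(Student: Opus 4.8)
The plan is to deduce this weighted nonlocal Brezis--Lieb identity from the elementary Brezis--Lieb lemma (Lemma~\ref{blc}), the double weighted Hardy--Littlewood--Sobolev inequality \eqref{hli}, and the weak-convergence principle of Lemma~\ref{bogachev}. Set $r=\frac{2N}{2N-2\alpha-\theta}$ and $v_n=u_n-u$. Since $0<\theta+2\alpha<N$ we have $r\in(1,\infty)$, and Fatou's lemma gives $u\in L^{rb}(\R^N)$, so $|u_n|^{b}$, $|v_n|^{b}$ and $|u|^{b}$ form bounded families in $L^{r}(\R^N)$. For $f,g\in L^{r}(\R^N)$ introduce the symmetric bilinear form
$$
B(f,g)=\int_{\R^N}\Big(|x|^{-\theta}*\frac{f}{|x|^{\alpha}}\Big)\frac{g}{|x|^{\alpha}}.
$$
Applying \eqref{hli} with $\delta=\theta$, $\mu=\alpha$ and $p=q=r$ --- the admissibility requirements $\frac{2}{r}+\frac{\theta+2\alpha}{N}=2$ and $1-\frac1r-\frac\theta N<\frac\alpha N<1-\frac1r$ follow at once from $\theta>0$ and $\theta+2\alpha<N$ --- yields $|B(f,g)|\le C\|f\|_r\|g\|_r$. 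In particular all three integrals in the statement are finite, and for fixed $h\in L^r(\R^N)$ the map $g\mapsto B(g,h)$ is a bounded linear functional on $L^r(\R^N)$.

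Next I would apply Lemma~\ref{blc} to the sequence $(u_n)$, bounded in $L^{rb}(\R^N)$, with $q=b\in[1,rb]$, to obtain that
$$
\rho_n:=|u_n|^{b}-|v_n|^{b}-|u|^{b}\longrightarrow 0\quad\text{strongly in }L^{r}(\R^N).
$$
Writing $|u_n|^{b}=|v_n|^{b}+|u|^{b}+\rho_n$ and expanding $B(|u_n|^{b},|u_n|^{b})$ by bilinearity gives
$$
B(|u_n|^{b},|u_n|^{b})-B(|v_n|^{b},|v_n|^{b})=B(|u|^{b},|u|^{b})+2\,B(|v_n|^{b},|u|^{b})+R_n,
$$
where $R_n$ collects every term containing $\rho_n$. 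By the estimate $|B(f,g)|\le C\|f\|_r\|g\|_r$, the uniform bounds on $\||v_n|^{b}\|_r$ and $\||u|^{b}\|_r$, and $\|\rho_n\|_r\to 0$, one gets $|R_n|\le C\big(\|\rho_n\|_r+\|\rho_n\|_r^{2}\big)\to 0$.

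It remains to show $B(|v_n|^{b},|u|^{b})\to 0$, which is the heart of the matter. Since $u_n\to u$ a.e. we have $v_n\to 0$ a.e., hence $|v_n|^{b}\to 0$ a.e.; as $(|v_n|^{b})$ is bounded in $L^{r}(\R^N)$ with $r\in(1,\infty)$, Lemma~\ref{bogachev} yields $|v_n|^{b}\rightharpoonup 0$ weakly in $L^{r}(\R^N)$. Testing this weak convergence against the bounded linear functional $g\mapsto B(g,|u|^{b})$ gives $B(|v_n|^{b},|u|^{b})\to 0$, and substituting into the identity above produces
$$
B(|u_n|^{b},|u_n|^{b})-B(|v_n|^{b},|v_n|^{b})\longrightarrow B(|u|^{b},|u|^{b}),
$$
which is precisely the claim. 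The two points needing care are the passage to weak convergence for the cross term --- where the nonlocal kernel could a priori prevent cancellation, but is controlled by the boundedness of the Stein--Weiss form --- and the verification of the Stein--Weiss admissibility conditions; the remaining manipulations are routine.
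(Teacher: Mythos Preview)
Your proof is correct and follows essentially the same route as the paper: both rely on Lemma~\ref{blc} to obtain $|u_n|^{b}-|v_n|^{b}\to|u|^{b}$ strongly in $L^{r}$, on Lemma~\ref{bogachev} to get $|v_n|^{b}\rightharpoonup 0$ weakly in $L^{r}$, and on the Stein--Weiss inequality \eqref{hli} to control the bilinear form. The only cosmetic difference is the algebraic decomposition: the paper writes $B(A,A)-B(V,V)=B(A-V,A-V)+2B(A-V,V)$ and uses the strong convergence $A-V\to|u|^{b}$ directly, whereas you substitute $A=V+|u|^{b}+\rho_n$ and collect the $\rho_n$-terms into a remainder; expanding one decomposition yields the other.
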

\begin{proof} For $n\in N$, we notice that
\begin{equation}\label{nb1}
\begin{aligned}
&\int_{\R^N}\Big(|x|^{-\theta}*\frac{|u_n|^{b}}{|x|^{\alpha}}\Big)\frac{|u_n|^b}{|x|^\alpha}dx-\int_{\R^N}\Big(|x|^{-\theta}*\frac{|u_n-u|^b}{|x|^{\alpha}}\Big)\frac{|u_n-u|^b}{|x|^\alpha}dx\\&=\intr \Big[|x|^{-\theta}*\Big(\frac{1}{|x|^\alpha}|u_n|^b-\frac{1}{|x|^\alpha}|u_n-u|^b\Big)\Big]\Big(\frac{1}{|x|^\alpha}|u_n|^b-\frac{1}{|x|^\alpha}|u_n-u|^b\Big)dx\\
&+2\intr \Big[|x|^{-\theta}*\Big(\frac{1}{|x|^\alpha}|u_n|^b-\frac{1}{|x|^\alpha}|u_n-u|^b\Big)\Big]\frac{1}{|x|^\alpha}|u_n-u|^b dx.
\end{aligned}
\end{equation}
Next, we use Lemma \ref{blc} with $q=b$, $r=\frac{2Nb}{2N-2\alpha-\theta}$ to get $|u_n-u|^b-|u_n|^b\to |u|^b$ strongly in $L^{\frac{2N}{2N-2\alpha-\theta}}(\R^N)$ and by Lemma \ref{bogachev} we have $|u_n-u|^{b}\rightharpoonup 0$ weakly in $L^{\frac{2N}{2N-2\alpha-\theta}}(\R^{N})$. Also by the double weighted Hardy-Littlewood-Sobolev inequality \eqref{hli} we obtain 
$$
|x|^{-\theta}*\Big(\frac{1}{|x|^\alpha}|u_n-u|^b-\frac{1}{|x|^\alpha}|u_n|^{b}\Big)\to |x|^{-\theta}*\frac{|u|^b}{|x|^\alpha} \quad\mbox{ in } L^{\frac{2N}{\theta+2\alpha}}(\R^N).
$$
Using all the above arguments and passing to the limit in \eqref{nb1} we conclude the proof.
	
\end{proof}

\begin{lemma}\label{anbl}%{\sf (Another nonlocal version of Brezis-Lieb lemma)}
Let $N\geq 3$, $\alpha\geq 0$, $\theta\in (0,N)$, $\theta+2\alpha< N$ and $b\in [1,\frac{2N}{2N-2\alpha-\theta})$. Assume $(u_n)$ is a bounded sequence in $L^{\frac{2Nb}{2N-2\alpha-\theta}}(\R^N)$ such that $u_n \rightarrow u$ almost everywhere in $\R^N$. Then, for any $h\in L^{\frac{2Nb}{2N-2\alpha-\theta}}(\R^N)$ we have
$$
\int_{\R^N}\Big(|x|^{-\theta}*\frac{|u_n|^b}{|x|^\alpha}\Big)\frac{1}{|x|^\alpha}|u_n|^{b-2}u_nh\; dx \rightarrow \int_{\R^N} \Big(|x|^{-\theta}*\frac{|u|^b}{|x|^\alpha}\Big)\frac{1}{|x|^\alpha}|u|^{b-2}uh\; dx.
$$
\end{lemma}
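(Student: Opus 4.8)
The plan is to reduce the statement to a single application of the double weighted Hardy--Littlewood--Sobolev inequality \eqref{hli}, after establishing that $|u_n|^b$ converges \emph{weakly} and $|u_n|^{b-2}u_nh$ converges \emph{strongly} in the natural Lebesgue space. Write $r=\frac{2Nb}{2N-2\alpha-\theta}$, $p=\frac{2N}{2N-2\alpha-\theta}$ and $B(f,g)=\int_{\R^N}\big(|x|^{-\theta}*\tfrac{f}{|x|^\alpha}\big)\tfrac{g}{|x|^\alpha}$, so that the claim is $B(|u_n|^b,|u_n|^{b-2}u_nh)\to B(|u|^b,|u|^{b-2}uh)$. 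Since $(u_n)$ is bounded in $L^{r}=L^{bp}$, the sequences $(|u_n|^b)$ and (by H\"older, using $r=bp$) $(|u_n|^{b-2}u_nh)$ are bounded in $L^{p}$, and $u\in L^{bp}$ by Fatou. One checks that taking $\delta=\theta$, $\mu=\alpha$ and both exponents equal to $p$ in \eqref{hli} is admissible exactly because $\theta\in(0,N)$ and $\theta+2\alpha<N$, the conjugate exponent being $p'=\frac{2N}{\theta+2\alpha}$; hence $B$ is a bounded symmetric bilinear form on $L^{p}\times L^{p}$.

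First I would record the two convergences. Boundedness of $(|u_n|^b)$ in $L^{p}$ together with $|u_n|^b\to|u|^b$ a.e.\ and Lemma \ref{bogachev} give $|u_n|^b\rightharpoonup|u|^b$ weakly in $L^{p}$. For the second, I would show the \emph{strong} convergence $|u_n|^{b-2}u_nh\to|u|^{b-2}uh$ in $L^{p}$: the integrands $\big||u_n|^{b-2}u_n-|u|^{b-2}u\big|^{p}|h|^{p}$ tend to $0$ a.e.\ by continuity of $s\mapsto|s|^{b-2}s$, and are dominated by $C\big(|u_n|^{(b-1)p}+|u|^{(b-1)p}\big)|h|^{p}$; since $\big(|u_n|^{(b-1)p}\big)$ is bounded in $L^{b/(b-1)}$ (again $r=bp$) while $|h|^{p}\in L^{b}$ is fixed, the right-hand side is uniformly integrable and uniformly small outside large balls, so Vitali's convergence theorem applies. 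I expect this strong-convergence step to be the main point: it is the only place where the fixed function $h$ enters, and it is precisely what compensates for the fact that $(|u_n|^b)$ itself converges only weakly.

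Finally I would split
\[
B(|u_n|^b,|u_n|^{b-2}u_nh)-B(|u|^b,|u|^{b-2}uh)=B\big(|u_n|^b,|u_n|^{b-2}u_nh-|u|^{b-2}uh\big)+B\big(|u_n|^b-|u|^b,|u|^{b-2}uh\big).
\]
By \eqref{hli} the first term is bounded by $C\,\||u_n|^b\|_{p}\,\||u_n|^{b-2}u_nh-|u|^{b-2}uh\|_{p}$, which tends to $0$ by the strong convergence above and the boundedness of $\||u_n|^b\|_{p}$. By symmetry of $B$, the second term equals the pairing of the fixed function $|x|^{-\alpha}\big(|x|^{-\theta}*\tfrac{|u|^{b-2}uh}{|x|^\alpha}\big)\in L^{p'}$ against $|u_n|^b-|u|^b$, and hence tends to $0$ by the weak convergence above. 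Together these give the assertion. (An alternative would be to push the Brezis--Lieb splitting $u_n=(u_n-u)+u$ of Lemma \ref{blc} through both slots of $B$; this works too, but the residual term $B\big(|u_n-u|^b,|u_n-u|^{b-2}(u_n-u)h\big)$ still has to be shown to vanish, which again relies on the tightness coming from $h$, so the argument above is shorter.)
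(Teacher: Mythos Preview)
Your argument is correct and takes a genuinely different route from the paper. The paper first reduces to $h\ge 0$, writes $v_n=u_n-u$, and expands $B(|u_n|^b,|u_n|^{b-2}u_nh)$ into four cross terms; it then uses the local Brezis--Lieb Lemma~\ref{blc} to obtain \emph{strong} $L^p$-convergence of $|u_n|^b-|v_n|^b\to|u|^b$ and of $|u_n|^{b-2}u_nh-|v_n|^{b-2}v_nh\to|u|^{b-2}uh$, pairs these against weak convergences coming from Lemma~\ref{bogachev}, and disposes of the residual term $B(|v_n|^b,|v_n|^{b-2}v_nh)$ via H\"older and one further weak convergence. Your two-term split is simply the (weak$\times$strong)-continuity of the bounded bilinear form $B$ on $L^p\times L^p$: you get the strong convergence of $|u_n|^{b-2}u_nh$ in one stroke by Vitali (uniform integrability and tightness are inherited from the fixed factor $|h|^p\in L^b$ through H\"older, exactly as you say), and pair it with $|u_n|^b\rightharpoonup|u|^b$. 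This bypasses Lemma~\ref{blc} entirely, needs no sign restriction on $h$, and is shorter; the paper's four-term decomposition is precisely the Brezis--Lieb alternative you sketch in your closing parenthesis.

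One small caveat: your pointwise step ``by continuity of $s\mapsto|s|^{b-2}s$'' breaks down at the endpoint $b=1$, where this map is $\operatorname{sgn}$ and is discontinuous at $0$; on $\{u=0,\ h\neq 0\}$ the integrand need not tend to $0$ a.e. For $b>1$ your Vitali argument is complete as written; the case $b=1$ would require a separate (elementary) treatment. The paper's second application of Lemma~\ref{blc} with $(z_n,z)=(u_nh^{1/b},uh^{1/b})$ is likewise delicate at that endpoint, so this is not a disadvantage of your approach relative to the paper's.
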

\begin{proof} 
Say $h=h^+-h^-$, then it is enough to prove our lemma for $h\geq 0$. Let $v_n=u_n-u$ and notice that
\begin{equation}\label{bl1}
\begin{aligned}
\intr \Big(|x|^{-\theta}*\frac{|u_n|^b}{|x|^{\alpha}}\Big)\frac{1}{|x|^\alpha}|u_n|^{b-2}u_nh=& \intr \Big[|x|^{-\theta}*\Big(\frac{1}{|x|^\alpha}|u_n|^b-\frac{1}{|x|^\alpha}|v_n|^b\Big)\Big]\Big(\frac{1}{|x|^\alpha}|u_n|^{b-2}u_nh-\frac{1}{|x|^\alpha}|v_n|^{b-2}v_nh\Big)\\
&+\intr \Big[|x|^{-\theta}*\Big(\frac{1}{|x|^\alpha}|u_n|^b-\frac{1}{|x|^\alpha}|v_n|^b\Big)\Big]\frac{1}{|x|^\alpha}|v_n|^{b-2}v_nh\\
&+\intr \Big[|x|^{-\theta}*\Big(\frac{1}{|x|^\alpha}|u_n|^{b-2}u_nh-\frac{1}{|x|^\alpha}|v_n|^{b-2}v_n h\Big)\Big]\frac{|v_n|^b}{|x|^\alpha}\\
&+\intr \Big(|x|^{-\theta}*\frac{|v_n|^b}{|x|^\alpha}\Big)\frac{1}{|x|^\alpha}|v_n|^{p-2}v_nh.
\end{aligned}
\end{equation}
Now, apply Lemma \ref{blc} with $q=b$ and $r=\frac{2Nb}{2N-2\alpha-\theta}$ and by taking $(z_n,z)=(u_n,u)$ and then $(z_n,z)=(u_nh^{1/b}, u h^{1/b})$ respectively, we get
$$
\left\{
\begin{aligned}
&|u_n|^b-|v_n|^b\to |u|^b \\
&|u_n|^{b-2}u_nh-|v_n|^{b-2}v_nh\to |u|^{b-2}uh
\end{aligned}
\right.
\quad\mbox{ strongly in }\; L^{\frac{2N}{2N-2\alpha-\theta}}(\R^N).
$$
Further, using the double weighted Hardy-Littlewood-Sobolev inequality we obtain
\begin{equation}\label{est00}
\left\{
\begin{aligned}
&|x|^{-\theta}*\Big(\frac{1}{|x|^\alpha}|u_n|^b-\frac{1}{|x|^\alpha}|v_n|^b\Big)\to |x|^{-\theta}*\frac{|u|^b}{|x|^\alpha} \\
&|x|^{-\theta}*\Big(\frac{1}{|x|^\alpha}|u_n|^{b-2}u_nh-\frac{1}{|x|^\alpha}|v_n|^{b-2}v_nh\Big)\to |x|^{-\theta}*\Big(\frac{1}{|x|^\alpha}|u|^{p-2}uh\Big)
\end{aligned}
\right.
\quad\mbox{ strongly in }\; L^{\frac{2N}{\theta+2\alpha}}(\R^N).
\end{equation}
By Lemma \ref{bogachev} we have
\begin{equation}\label{est01}
\left\{
\begin{aligned}
&|u_n|^{b-2}u_n h\rightharpoonup |u|^{b-2}uh\\ &|v_n|^b\rightharpoonup 0\\
&|v_n|^{b-2}v_nh\rightharpoonup 0
\end{aligned}
\right.
\quad \mbox{ weakly in }\; L^{\frac{2N}{2N-2\alpha-\theta}}(\R^N)
\end{equation}
From \eqref{est00} and \eqref{est01} we get
\begin{equation}\label{est02}
\begin{aligned}
&\intr \Big[|x|^{-\theta}*\Big(\frac{1}{|x|^\alpha}|u_n|^b-\frac{1}{|x|^\alpha}|v_n|^b\Big)\Big]\Big(\frac{1}{|x|^\alpha}|u_n|^{b-2}u_nh-\frac{1}{|x|^\alpha}|v_n|^{b-2}v_n h\Big)\to \int_{\R^N} \Big(|x|^{-\theta}*\frac{|u|^b}{|x|^\alpha}\Big)\frac{1}{|x|^\alpha}|u|^{b-2}uh,\\
& \intr \Big[|x|^{-\theta}*\Big(\frac{1}{|x|^\alpha}|u_n|^b-\frac{1}{|x|^\alpha}|v_n|^b\Big)\Big]\frac{1}{|x|^\alpha}|v_n|^{b-2}v_nh\to 0,\\
&\intr \Big[|x|^{-\theta}*\Big(\frac{1}{|x|^\alpha}|u_n|^{b-2}u_nh-\frac{1}{|x|^\alpha}|v_n|^{b-2}v_nh\Big)\Big]\frac{|v_n|^b}{|x|^\alpha}\to 0.
\end{aligned}
\end{equation}
Using H\"older's inequality and the double weighted Hardy-Littlewood-Sobolev inequality, we have
\begin{equation}\label{est03}
\begin{aligned}
\left| \intr \Big(|x|^{-\theta}*\frac{|v_n|^{b}}{|x|^\alpha}\Big)\frac{1}{|x|^\alpha}|v_n|^{b-2}v_nh \right|& \leq \|v_n\|^b_{\frac{2Nb}{2N-2\alpha-\theta}}\||v_n|^{b-1}h\|_{\frac{2N}{2N-2\alpha-\theta}}\\
&\leq C \||v_n|^{b-1}h\|_{\frac{2N}{2N-2\alpha-\theta}}.
\end{aligned}
\end{equation}
Also, by Lemma \ref{bogachev} we have $v_n^{\frac{2N(b-1)}{2N-2\alpha-\theta}}\rightharpoonup 0$ weakly in $L^{\frac{b}{b-1}}(\R^N)$ so 
$$
\||v_n|^{b-1}h\|_{\frac{2N}{2N-2\alpha-\theta}}=\left(\intr |v_n|^{\frac{2N(b-1)}{2N-2\alpha-\theta}}|h|^{\frac{2N}{2N-2\alpha-\theta}}  \right)^{\frac{2N-2\alpha-\theta}{2N}}\to 0.
$$
Hence, by \eqref{est03} we have
\begin{equation}\label{est04}
\lim_{n\to \infty} \intr \Big(|x|^{-\theta}*\frac{|v_n|^{b}}{|x|^\alpha}\Big)\frac{1}{|x|^\alpha}|v_n|^{b-2}v_nh=0.
\end{equation}
	Passing to the limit in \eqref{bl1}, from \eqref{est02} and \eqref{est04} we reach the conclusion.
\end{proof}

\bigskip

In the next section, we investigate the groundstate solutions to \eqref{fr}.
\section{Proof of Theorem \ref{gstate}}
Assume $\lambda> 0$.  For $u,\phi\in X_{v}(\R^{N})$  we have
\begin{equation*}
\begin{aligned}
\langle {\mathcal L}'_\lambda(u), \phi \rangle&= \int_{\R^{N}} v(x) |\nabla u|^{m-2}\nabla u \nabla \phi + \int_{\R^{N}} V(x)|u|^{m-2}u \phi -\int_{\R^{N}}\Big({|x|^{-\theta}*\frac{|u|^{b}}{|x|^{\alpha}}}\Big)\frac{|u|^{b-1}}{|x|^{\alpha}}\phi \\ &-\lambda\int_{\R^{N}}\Big(|x|^{-\gamma}*\frac{|u|^{c}}{|x|^{\beta}}\Big)\frac{|u|^{c-1}}{|x|^{\beta}}\phi.
\end{aligned}
\end{equation*}
Also, for $t>0$ we have
\begin{equation*}
\begin{aligned}
\langle {\mathcal L}'_\lambda(tu), tu \rangle &= t^{m}\|u\|_{X_{v}}^{m}- t^{2b}\int_{\R^{N}}\Big({|x|^{-\theta}*\frac{|u|^{b}}{|x|^{\alpha}}}\Big)\frac{|u|^{b}}{|x|^{\alpha}}-\lambda t^{2c}\int_{\R^{N}}\Big(|x|^{-\gamma}*\frac{|u|^{c}}{|x|^{\beta}}\Big)\frac{|u|^{c}}{|x|^{\beta}}.
\end{aligned}
\end{equation*}

As $b>c>\frac{m}{2}$, so the equation $\langle {\mathcal L}_\lambda'(tu),tu \rangle= 0 $ has a unique positive solution $t=t(u)$. The element $tu\in {\mathcal N_\lambda}$ is called the {\it projection of $u$} on ${\mathcal N_\lambda}$. The main properties of the Nehari manifold ${\mathcal N_\lambda}$ which we use in this paper are given by the following lemmas:

\begin{lemma}\label{nehari1}
${\mathcal L}_\lambda \!\mid_{\mathcal N_\lambda}$ is coercive and bounded from below by a positive constant.
\end{lemma}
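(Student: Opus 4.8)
The plan is to use the Nehari constraint to rewrite the energy as a combination of manifestly nonnegative terms, and then to prove separately that points of $\mathcal{N}_\lambda$ stay uniformly away from the origin. First I would fix $u\in\mathcal{N}_\lambda$ and abbreviate the two nonlocal integrals by
\[
A(u)=\int_{\R^N}\Big(|x|^{-\theta}*\tfrac{|u|^{b}}{|x|^{\alpha}}\Big)\tfrac{|u|^{b}}{|x|^{\alpha}},\qquad
B(u)=\int_{\R^N}\Big(|x|^{-\gamma}*\tfrac{|u|^{c}}{|x|^{\beta}}\Big)\tfrac{|u|^{c}}{|x|^{\beta}},
\]
both of which are nonnegative since the Riesz kernels are positive. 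The identity $\langle\mathcal{L}_\lambda'(u),u\rangle=0$ then reads $\|u\|_{X_v}^{m}=A(u)+\lambda B(u)$, so substituting into \eqref{fr1} and using $b>c>\tfrac m2$, hence $\tfrac1{2b}<\tfrac1{2c}<\tfrac1m$, together with $\lambda>0$,
\[
\mathcal{L}_\lambda(u)=\Big(\frac1m-\frac1{2b}\Big)A(u)+\lambda\Big(\frac1m-\frac1{2c}\Big)B(u)\ \ge\ \Big(\frac1m-\frac1{2c}\Big)\big(A(u)+\lambda B(u)\big)=\Big(\frac1m-\frac1{2c}\Big)\|u\|_{X_v}^{m}.
\]
Since $c>\tfrac m2$ the constant $\tfrac1m-\tfrac1{2c}$ is positive, and this inequality already shows that $\mathcal{L}_\lambda\!\mid_{\mathcal{N}_\lambda}$ is coercive.

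For the uniform positive lower bound it then suffices, by the displayed estimate, to prove that $\inf_{\mathcal{N}_\lambda}\|u\|_{X_v}>0$. To this end I would bound $A(u)$ and $B(u)$ from above: applying the double weighted Hardy--Littlewood--Sobolev inequality \eqref{hli} with $\delta=\theta$, $\mu=\alpha$ and $p=q=\tfrac{2N}{2N-2\alpha-\theta}$ — an admissible choice precisely because $\theta+2\alpha<N$ — gives $A(u)\le C\,\|u\|_{2Nb/(2N-2\alpha-\theta)}^{2b}$, and symmetrically $B(u)\le C\,\|u\|_{2Nc/(2N-2\beta-\gamma)}^{2c}$ using $\gamma+2\beta<N$. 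The role of the assumptions \eqref{p} and \eqref{q} (respectively \eqref{p1} and \eqref{q1}) is exactly to place the exponents $\tfrac{2Nb}{2N-2\alpha-\theta}$ and $\tfrac{2Nc}{2N-2\beta-\gamma}$ in the interval $[m_p,m_p^{*}]$ (respectively in $[1,\infty)$ when $m_p=N$), so that Lemma \ref{se} applies and produces constants with $A(u)\le C_1\|u\|_{X_v}^{2b}$ and $B(u)\le C_2\|u\|_{X_v}^{2c}$.

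Combining these bounds with the Nehari identity, every $u\in\mathcal{N}_\lambda$ satisfies $\|u\|_{X_v}^{m}\le C_1\|u\|_{X_v}^{2b}+\lambda C_2\|u\|_{X_v}^{2c}$; since $u\neq0$ we may divide by $\|u\|_{X_v}^{m}$ to get $1\le C_1\|u\|_{X_v}^{2b-m}+\lambda C_2\|u\|_{X_v}^{2c-m}$, and since $2b-m>0$ and $2c-m>0$ this forces $\|u\|_{X_v}\ge\rho$ for some $\rho>0$ independent of $u$. Feeding this back into the first estimate yields $\mathcal{L}_\lambda(u)\ge\big(\tfrac1m-\tfrac1{2c}\big)\rho^{m}>0$ on $\mathcal{N}_\lambda$, as required. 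I expect the only point demanding care to be the exponent bookkeeping in the second step — checking that under \eqref{p}--\eqref{q1} the Lebesgue exponents produced by \eqref{hli} really fall in the range covered by Lemma \ref{se}; once that is in place the argument is a routine interplay of the Stein--Weiss inequality, the Sobolev embedding, and the Nehari identity, with no genuine analytic obstacle.
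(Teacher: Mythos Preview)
Your proof is correct and follows essentially the same route as the paper: the paper obtains the key inequality $\mathcal{L}_\lambda(u)\ge\big(\tfrac1m-\tfrac1{2c}\big)\|u\|_{X_v}^m$ by writing $\mathcal{L}_\lambda(u)=\mathcal{L}_\lambda(u)-\tfrac1{2c}\langle\mathcal{L}_\lambda'(u),u\rangle$, which is algebraically the same manipulation as your direct substitution of the Nehari identity, and then derives the uniform lower bound $\|u\|_{X_v}\ge C_0$ exactly as you do via the Stein--Weiss inequality \eqref{hli} combined with the embeddings of Lemma~\ref{se}.
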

\begin{proof} 
First we show that ${\mathcal L}_\lambda \!\mid_{\mathcal N_\lambda}$ is coercive. Note that
$$
\begin{aligned}
{\mathcal L}_\lambda(u)&= {\mathcal L}_\lambda(u)-\frac{1}{2c}\langle {\mathcal L}_\lambda'(u), u \rangle \\
&=\Big(\frac{1}{m}-\frac{1}{2c}\Big)\|u\|_{X_{v}}^m+\Big(\frac{1}{2c}-\frac{1}{2b}\Big)\int_{\R^{N}} \Big(|x|^{-\theta}*\frac{|u|^{b}}{|x|^{\alpha}}\Big)\frac{|u|^{b}}{|x|^\alpha} \\
&\geq \Big(\frac{1}{m}-\frac{1}{2c}\Big)\|u\|_{X_{v}}^m.
\end{aligned}
$$
Next, using the double weighted Hardy-Littlewood-Sobolev inequality together with the continuous embeddings  $X_{v}(\R^N) \hookrightarrow L^{\frac{2Nb}{2N-2\alpha-\theta}}(\R^N)$ and  $X_{v}(\R^N) \hookrightarrow L^{\frac{2Nc}{2N-2\beta-\gamma}}(\R^N)$, for any $u\in {\mathcal N_\lambda}$ we have
$$
\begin{aligned}
0=\langle {\mathcal L}_\lambda'(u),u\rangle& =\|u\|_{X_{v}}^m-\int_{\R^{N}}\Big(|x|^{-\theta}*\frac{|u|^{b}}{|x|^\alpha}\Big)\frac{|u|^{b}}{|x|^\alpha}-\lambda \int_{\R^{N}}\Big(|x|^{-\gamma}*\frac{|u|^{c}}{|x|^\beta}\Big)\frac{|u|^{q}}{|x|^\beta}\\
&\geq \|u\|_{X_{v}}^m-C\|u\|_{X_{v}}^{2b}-C_\lambda \|u\|_{X_{v}}^{2c}.
\end{aligned}
$$
Therefore, there exists $C_0>0$ such that
\begin{equation}\label{cnot}
\|u\|_{X_{v}}\geq C_0>0\quad\mbox{for all }u\in {\mathcal N_\lambda}.
\end{equation}
Hence, using coercivity of ${\mathcal L}_\lambda \!\mid_{\mathcal N_\lambda}$ and \eqref{cnot}, we get
$$
{\mathcal L}_\lambda(u) \geq \Big(\frac{1}{m}-\frac{1}{2c}\Big) C_0^m>0.
$$
\end{proof}
	
\begin{lemma}\label{nehari2}
Any critical point $u$ of ${\mathcal L}_\lambda \!\mid_{\mathcal N_\lambda}$ is a free critical point.
\end{lemma}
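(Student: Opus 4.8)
The plan is to apply the Lagrange multiplier rule on the constraint manifold $\mathcal N_\lambda$. Set
$$
G(u)=\langle{\mathcal L}_\lambda'(u),u\rangle=\|u\|_{X_{v}}^{m}-\int_{\R^{N}}\Big(|x|^{-\theta}*\frac{|u|^{b}}{|x|^{\alpha}}\Big)\frac{|u|^{b}}{|x|^{\alpha}}-\lambda\int_{\R^{N}}\Big(|x|^{-\gamma}*\frac{|u|^{c}}{|x|^{\beta}}\Big)\frac{|u|^{c}}{|x|^{\beta}},
$$
so that ${\mathcal N_\lambda}=\{u\in X_{v}(\R^N)\setminus\{0\}:G(u)=0\}$. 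Since ${\mathcal L}_\lambda\in C^1(X_v)$, the functional $G$ is also $C^1$, and for $u\in{\mathcal N_\lambda}$ I would compute $\langle G'(u),u\rangle$ by differentiating $t\mapsto G(tu)$ at $t=1$: using the homogeneity $\|tu\|_{X_{v}}^{m}=t^{m}\|u\|_{X_{v}}^{m}$ and the scalings $t^{2b}$, $t^{2c}$ of the two nonlocal terms, one gets
$$
\langle G'(u),u\rangle=m\|u\|_{X_{v}}^{m}-2b\int_{\R^{N}}\Big(|x|^{-\theta}*\frac{|u|^{b}}{|x|^{\alpha}}\Big)\frac{|u|^{b}}{|x|^{\alpha}}-2\lambda c\int_{\R^{N}}\Big(|x|^{-\gamma}*\frac{|u|^{c}}{|x|^{\beta}}\Big)\frac{|u|^{c}}{|x|^{\beta}}.
$$

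The key step is to show $\langle G'(u),u\rangle<0$ for every $u\in{\mathcal N_\lambda}$. Using the Nehari identity $G(u)=0$ to eliminate $\|u\|_{X_{v}}^{m}$, this reduces to
$$
\langle G'(u),u\rangle=(m-2b)\int_{\R^{N}}\Big(|x|^{-\theta}*\frac{|u|^{b}}{|x|^{\alpha}}\Big)\frac{|u|^{b}}{|x|^{\alpha}}+\lambda(m-2c)\int_{\R^{N}}\Big(|x|^{-\gamma}*\frac{|u|^{c}}{|x|^{\beta}}\Big)\frac{|u|^{c}}{|x|^{\beta}}.
$$
Since $b>c>\tfrac{m}{2}$ we have $m-2b<0$ and $m-2c<0$; since $\lambda>0$ and each nonlocal integral is nonnegative — indeed strictly positive, because the Riesz potential of the nonnegative, not identically zero function $|u|^{b}/|x|^{\alpha}$ is positive a.e. for $u\neq0$ — both terms are $\le0$ with the first strictly negative. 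Hence $\langle G'(u),u\rangle<0$; in particular $G'(u)\neq0$, so ${\mathcal N_\lambda}$ is a $C^1$ submanifold of $X_{v}(\R^N)$ of codimension one near each of its points.

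Finally, let $u$ be a critical point of ${\mathcal L}_\lambda\!\mid_{\mathcal N_\lambda}$. By the Lagrange multiplier theorem there is $\mu\in\R$ with ${\mathcal L}_\lambda'(u)=\mu\,G'(u)$ in $X_{v}(\R^N)^{*}$. Testing this identity against $u$ gives $0=\langle{\mathcal L}_\lambda'(u),u\rangle=\mu\langle G'(u),u\rangle$, and since $\langle G'(u),u\rangle\neq0$ by the previous paragraph, we conclude $\mu=0$, i.e. ${\mathcal L}_\lambda'(u)=0$; thus $u$ is a free critical point. The only genuine obstacle is the sign computation for $\langle G'(u),u\rangle$ — everything else is standard Nehari-manifold bookkeeping — and the hypotheses $b>c>\tfrac{m}{2}$ together with $\lambda>0$ are exactly what force that sign to be strict.
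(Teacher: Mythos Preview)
Your proof is correct and follows essentially the same route as the paper's: define the constraint functional $G=\langle{\mathcal L}_\lambda',\cdot\rangle$, show $\langle G'(u),u\rangle<0$ on ${\mathcal N_\lambda}$, and then use the Lagrange multiplier rule to force the multiplier to vanish. The only cosmetic difference is that the paper eliminates the $c$-term (rewriting $\langle G'(u),u\rangle=(m-2c)\|u\|_{X_v}^{m}-2(b-c)\int\big(|x|^{-\theta}*\frac{|u|^b}{|x|^\alpha}\big)\frac{|u|^b}{|x|^\alpha}$ and invoking the uniform lower bound $\|u\|_{X_v}\ge C_0$ from the preceding lemma), whereas you eliminate the norm term and argue strict positivity of the first nonlocal integral; both are equivalent one-line computations.
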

\begin{proof}
Let us assume $ {\mathcal K}(u)=\langle {\mathcal L}_\lambda'(u),u\rangle $ for any $u \in X_{v}(\R^N)$. Using \eqref{cnot}, for any $u \in X_{v}(\R^N)$  we get
\begin{equation}\label{cnot1}
\begin{aligned}
\langle {\mathcal K}'(u),u\rangle&=m\|u\|^m-2b\int_{\R^{N}}\Big(|x|^{-\theta}*\frac{|u|^{b}}{|x|^\alpha}\Big)\frac{|u|^{b}}{|x|^\alpha}-2c \lambda\int_{\R^{N}}\Big(|x|^{-\gamma}*\frac{|u|^{c}}{|x|^\beta}\Big)\frac{|u|^{c}}{|x|^{\beta}}\\
&=(m-2c)\|u\|_{X_{v}}^m-2(b-c)\int_{\R^{N}}\Big(|x|^{-\theta}*\frac{|u|^{b}}{|x|^\alpha}\Big)\frac{|u|^{b}}{|x|^\alpha}\\
&\leq -(2c-m)\|u\|_{X_{v}}^m\\
&<-(2c-m)C_0.
\end{aligned}
\end{equation}
Now, say $u\in {\mathcal N_\lambda}$ is a critical point of ${\mathcal L}_\lambda \!\mid_{\mathcal N_\lambda}$. Using the Lagrange multiplier theorem, there exists $\nu \in \R$ such that ${\mathcal L}_\lambda'(u)=\nu {\mathcal K}'(u)$. So, in particular we have $\langle {\mathcal L}_\lambda '(u),u\rangle=\nu \langle {\mathcal K}'(u),u\rangle$. Since $\langle {\mathcal K}'(u),u\rangle<0$, which further implies $\nu=0$ so ${\mathcal L}_\lambda '(u)=0$.
	
\end{proof}

\begin{lemma}\label{nehari3}
Any sequence $(u_n)$ which is a $(PS)$ sequence for ${\mathcal L}_\lambda \!\mid_{\mathcal N_\lambda}$ is a $(PS)$ sequence for ${\mathcal L}_\lambda$.
\end{lemma}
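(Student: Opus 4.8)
The plan is to run the standard Lagrange-multiplier argument for the constraint $\mathcal N_\lambda=\{u\neq 0:{\mathcal K}(u)=0\}$ with ${\mathcal K}(u)=\langle{\mathcal L}_\lambda'(u),u\rangle$, exploiting the two quantitative facts already established in Lemmas \ref{nehari1} and \ref{nehari2}: namely \eqref{cnot}, that elements of $\mathcal N_\lambda$ are bounded away from $0$ in $X_{v}$, and \eqref{cnot1}, that $\langle{\mathcal K}'(u),u\rangle$ is negative and bounded away from $0$ on $\mathcal N_\lambda$. So let $(u_n)\subset\mathcal N_\lambda$ satisfy ${\mathcal L}_\lambda(u_n)\to c$ and $\|({\mathcal L}_\lambda\!\mid_{\mathcal N_\lambda})'(u_n)\|_{X_{v}^{*}}\to 0$. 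First I would observe that $(u_n)$ is bounded in $X_{v}(\R^N)$, which is immediate from the coercivity of ${\mathcal L}_\lambda\!\mid_{\mathcal N_\lambda}$ (Lemma \ref{nehari1}) together with ${\mathcal L}_\lambda(u_n)\to c$. Since ${\mathcal L}_\lambda$ and its restriction take the same values on $\mathcal N_\lambda$, the energy-level condition transfers for free, so the real content is to show ${\mathcal L}_\lambda'(u_n)\to 0$ in $X_{v}^{*}$.

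By the Lagrange multiplier theorem --- applicable because ${\mathcal K}'(u_n)\neq 0$ on $\mathcal N_\lambda$, a consequence of $\langle{\mathcal K}'(u_n),u_n\rangle<0$ in \eqref{cnot1} --- there is for each $n$ a scalar $\nu_n\in\R$ with
\[
\|{\mathcal L}_\lambda'(u_n)-\nu_n{\mathcal K}'(u_n)\|_{X_{v}^{*}}=\|({\mathcal L}_\lambda\!\mid_{\mathcal N_\lambda})'(u_n)\|_{X_{v}^{*}}\longrightarrow 0.
\]
Pairing the left-hand expression with $u_n$, using that $\|u_n\|_{X_{v}}$ is bounded and that $\langle{\mathcal L}_\lambda'(u_n),u_n\rangle=0$ since $u_n\in\mathcal N_\lambda$, I would deduce $\nu_n\langle{\mathcal K}'(u_n),u_n\rangle\to 0$. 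On the other hand, \eqref{cnot1} together with \eqref{cnot} gives
\[
\langle{\mathcal K}'(u_n),u_n\rangle\leq-(2c-m)\|u_n\|_{X_{v}}^m\leq-(2c-m)C_0^m<0
\]
(here $2c>m$), so $\langle{\mathcal K}'(u_n),u_n\rangle$ is bounded away from $0$, forcing $\nu_n\to 0$.

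To conclude, I would bound $\|{\mathcal K}'(u_n)\|_{X_{v}^{*}}$: since $(u_n)$ is bounded in $X_{v}$ and ${\mathcal K}$ is $C^1$ with derivative sending bounded sets to bounded sets --- this relies only on the growth conditions \eqref{p}--\eqref{q1}, the double weighted Hardy-Littlewood-Sobolev inequality \eqref{hli} and the embeddings of Lemma \ref{se}, exactly as in the argument showing ${\mathcal L}_\lambda\in C^1(X_v)$ --- there is $M>0$ with $\|{\mathcal K}'(u_n)\|_{X_{v}^{*}}\leq M$ for all $n$. Hence
\[
\|{\mathcal L}_\lambda'(u_n)\|_{X_{v}^{*}}\leq\|({\mathcal L}_\lambda\!\mid_{\mathcal N_\lambda})'(u_n)\|_{X_{v}^{*}}+|\nu_n|\,\|{\mathcal K}'(u_n)\|_{X_{v}^{*}}\longrightarrow 0,
\]
and, the level condition being already inherited, $(u_n)$ is a $(PS)$ sequence for ${\mathcal L}_\lambda$. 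The only delicate point is the implication $\nu_n\langle{\mathcal K}'(u_n),u_n\rangle\to 0\Rightarrow\nu_n\to 0$; this is exactly where the uniform lower bounds of Lemmas \ref{nehari1} and \ref{nehari2} are indispensable, and the argument breaks down without them.
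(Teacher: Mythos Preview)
Your proof is correct and follows essentially the same Lagrange-multiplier route as the paper: show boundedness of $(u_n)$ via coercivity, extract multipliers $\nu_n$, pair against $u_n$ to force $\nu_n\to 0$ using \eqref{cnot1}, and conclude ${\mathcal L}_\lambda'(u_n)\to 0$. If anything, your version is slightly more careful in making explicit that $\|{\mathcal K}'(u_n)\|_{X_v^{*}}$ stays bounded, which is needed to pass from $\nu_n\to 0$ to $\nu_n{\mathcal K}'(u_n)\to 0$; the paper leaves this last step implicit.
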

\begin{proof}
Assume that $(u_n)\subset {\mathcal N_\lambda}$ is a $(PS)$ sequence for ${\mathcal L}_\lambda \!\mid_{\mathcal N_\lambda}$. As,
$$
{\mathcal L}_\lambda(u_n)\geq \Big(\frac{1}{m}-\frac{1}{2c}\Big)\|u_n\|_{X_{v}}^m,
$$
this gives us that $(u_n)$ is bounded in ${X_{v}}$. Next, we show that ${\mathcal L}'_\lambda(u_n)\to 0$. Since,
$$
{\mathcal L}'_\lambda(u_n)- \nu_n {\mathcal K}'(u_n)= {\mathcal L}'_\lambda \!\mid_{\mathcal N_\lambda}(u_n)= o(1),
$$
for some $\nu_n \in \R$. Hence,
$$
\nu_n \langle {\mathcal K}'(u_n),u_n \rangle= \langle {\mathcal L}_\lambda '(u_n),u_n \rangle + o(1)= o(1).
$$
Using \eqref{cnot1}, we get $\nu_n \to 0$ which further gives us that ${\mathcal L}_\lambda '(u_n) \to 0$.
\end{proof}

\subsection{Compactness result}\label{compc}
Define the energy functional ${\mathcal I}:X_{v}(\R^N)\to \R$ by
$$
{\mathcal I}(u)=\frac{1}{m}\|u\|^{m}-\frac{1}{2b}\intr \Big(|x|^{-\theta}*\frac{|u|^b}{|x|^\alpha}\Big)\frac{|u|^b}{|x|^\alpha},
$$
and the associated Nehari manifold for ${\mathcal I}$ is given as
$$
{\mathcal N}_{{\mathcal I}}=\{u\in X_{v}(\R^N)\setminus\{0\}: \langle {\mathcal I}'(u),u\rangle=0\},
$$
and let 
$$
d_{\mathcal I}=\inf_{u\in {\mathcal N}_{\mathcal I}}{\mathcal I}(u).
$$

Also, for all $\phi \in C^{\infty}_{0}(\R^N)$, we have
\begin{equation*}
\begin{aligned}
\langle {\mathcal I}'(u), \phi \rangle&= \int_{\R^{N}} v(x) |\nabla u|^{m-2}\nabla u \nabla \phi + \int_{\R^{N}} V(x)|u|^{m-2}u \phi -\int_{\R^{N}}\Big({|x|^{-\theta}*\frac{|u|^{b}}{|x|^{\alpha}}}\Big)\frac{|u|^{b-1}}{|x|^{\alpha}}\phi.
\end{aligned}
\end{equation*}
and
\begin{equation*}
\langle {\mathcal I}'(u),u\rangle= \|u\|_{X_{v}}^{m}-\int_{\R^{N}}\Big(|x|^{-\theta}*\frac{|u|^{b}}{|x|^{\alpha}}\Big)\frac{|u|^{b}}{|x|^{\alpha}}.
\end{equation*}

\begin{lemma}\label{compact}
Let us assume that $(u_n)\subset{\mathcal N}_{\mathcal I}$ is a $(PS)$ sequence of ${\mathcal L}_\lambda \!\mid_{{\mathcal N}_{\lambda}}$, that is,
\begin{enumerate}
\item[(a)] $({\mathcal L}_\lambda(u_n))$ is bounded;
\item[(b)] ${\mathcal L}_\lambda'\!\mid_{{\mathcal N}_{\lambda}}(u_n)\to 0$ strongly in $X_{v}^{-1}(\R^N)$.
\end{enumerate}
Then there exists a solution $u\in X_{v}(\R^N)$ of \eqref{fr} such that, if we replace the sequence $(u_n)$ with a subsequence, then one of the following alternative holds:
	
\smallskip
	
\noindent $(A_1)$ either $u_n\to u$ strongly in $X_{v}(\R^N)$;
	
or
	
\smallskip
	
\noindent $(A_2)$ $u_n\rightharpoonup u$ weakly in $X_{v}(\R^N)$ and there exists a positive integer $k\geq 1$ and $k$ functions $u_1,u_2,\dots, u_k\in X_{v}(\R^N)$ which are nontrivial  weak solutions to \eqref{squ} and $k$ sequences of points $(w_{n,1})$, $(w_{n,2})$, $\dots$, $(w_{n,k})\subset \R^N$ such that the following conditions hold:
\begin{enumerate}
\item[(i)] $|w_{n,j}|\to \infty$ and $|w_{n,j}-w_{n,i}|\to \infty$  if $i\neq j$, $n\to \infty$;
\item[(ii)] $ u_n-\sum_{j=1}^ku_j(\cdot+w_{n,j})\to u$ in $X_{v}(\R^N)$;
\item[(iii)] $ {\mathcal L}_\lambda(u_n)\to {\mathcal L}_{\lambda}(u)+\sum_{j=1}^k {\mathcal I}(u_j)$.
	\end{enumerate}
\end{lemma}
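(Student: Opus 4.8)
\emph{Proof plan.} This is a Struwe-type global compactness (profile) decomposition for the weighted Choquard functional, and I would follow the scheme of \cite{CM2016,CV2010}, the extra ingredients being the weighted nonlocal Brezis-Lieb lemmas of Section~2. I carry out the argument under \eqref{p} and \eqref{q}; the case \eqref{p1}, \eqref{q1} is entirely analogous. By Lemma~\ref{nehari3}, $(u_n)$ is a $(PS)$ sequence for the free functional $\mathcal{L}_\lambda$, and the coercivity estimate of Lemma~\ref{nehari1} shows it is bounded in $X_v(\R^N)$; passing to a subsequence, $u_n\rightharpoonup u$ in $X_v(\R^N)$, $u_n\to u$ in $L^s_{\mathrm{loc}}(\R^N)$ for every $s\in[m_p,m_p^*)$ by the compact embeddings of Lemma~\ref{se}, and $u_n\to u$ a.e.\ in $\R^N$. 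The standard monotonicity argument for the weighted $m$-Laplacian then also gives $\nabla u_n\to\nabla u$ a.e. Passing to the limit in $\langle\mathcal{L}_\lambda'(u_n),\phi\rangle\to0$ for each fixed $\phi\in C_c^\infty(\R^N)$ --- the gradient and potential terms by these convergences, the two nonlocal terms by Lemma~\ref{anbl} with $h=\phi$ (once with $(\theta,\alpha,b)$ and once with $(\gamma,\beta,c)$) --- and using density of $C_c^\infty(\R^N)$ in $X_v(\R^N)$, I conclude $\mathcal{L}_\lambda'(u)=0$, so $u$ is a weak solution of \eqref{fr}.

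\emph{First splitting.} Put $u_n^1:=u_n-u$, so $u_n^1\rightharpoonup0$. Combining Lemma~\ref{blc} with Lemma~\ref{nlocbl} applied to the $(\theta,\alpha,b)$ nonlocal term, and using that under \eqref{q} the embedding $X_v(\R^N)\hookrightarrow L^{\frac{2Nc}{2N-2\beta-\gamma}}(\R^N)$ is compact --- so the $\lambda$ nonlocal term is weakly sequentially continuous on $X_v(\R^N)$ and collects no mass from $u_n^1$ --- I obtain
$$
\mathcal{L}_\lambda(u_n)=\mathcal{L}_\lambda(u)+\mathcal{I}(u_n^1)+o(1),\qquad \mathcal{L}_\lambda'(u_n)-\mathcal{L}_\lambda'(u)-\mathcal{I}'(u_n^1)\to0\ \text{ in }\ X_v^{-1}(\R^N).
$$
Since $\mathcal{L}_\lambda'(u)=0$ and $\mathcal{L}_\lambda'(u_n)\to0$, the sequence $(u_n^1)$ is a bounded $(PS)$ sequence for $\mathcal{I}$. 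This is exactly why, in $(A_2)$, the profiles escaping to infinity solve (the limit problem associated with) \eqref{squ} and enter the energy through $\mathcal{I}$ and not through $\mathcal{L}_\lambda$.

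\emph{Concentration dichotomy and iteration.} Apply Lemma~\ref{cc} with $r=\frac{2Nb}{2N-2\alpha-\theta}$ to $u_n^1$. If $\sup_{y\in\R^N}\int_{B_1(y)}|u_n^1|^{r}\to0$, then $u_n^1\to0$ in $L^{r}(\R^N)$, hence by \eqref{hli} and $\langle\mathcal{I}'(u_n^1),u_n^1\rangle=o(1)$ one gets $\|u_n^1\|_{X_v}\to0$, i.e.\ $(A_1)$. Otherwise there are $\delta>0$ and $w_{n,1}\in\R^N$ with $\int_{B_1(w_{n,1})}|u_n^1|^{r}\ge\delta$; since $u_n^1\rightharpoonup0$ we must have $|w_{n,1}|\to\infty$, and $u_n^1(\cdot+w_{n,1})$ converges weakly, and locally strongly, to a nontrivial $u_1$ solving the (translation-invariant) limit problem associated with \eqref{squ}, with $\mathcal{I}(u_1)\ge d_{\mathcal{I}}>0$, positivity of $d_{\mathcal{I}}$ coming from the Nehari estimates of Lemma~\ref{nehari1} applied to $\mathcal{I}$. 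Set $u_n^2:=u_n-u-u_1(\cdot+w_{n,1})$; the Brezis-Lieb lemmas apply again --- legitimately, because the Riesz-potential cross-interactions between the fixed profile $u$ and the far-away profile $u_1(\cdot+w_{n,1})$ vanish as $|w_{n,1}|\to\infty$ --- and yield that $(u_n^2)$ is once more a bounded $(PS)$ sequence for $\mathcal{I}$ with $\mathcal{L}_\lambda(u_n)=\mathcal{L}_\lambda(u)+\mathcal{I}(u_1)+\mathcal{I}(u_n^2)+o(1)$. Iterating, at each stage either the current remainder goes to $0$ strongly in $X_v(\R^N)$ and the process stops, or one extracts a further nontrivial solution $u_j$ of \eqref{squ} and points $w_{n,j}$ with $|w_{n,j}|\to\infty$ and $|w_{n,j}-w_{n,i}|\to\infty$ for $i\ne j$ (two concentration sequences at bounded mutual distance would have overlapping weak limits and could not both survive the extraction). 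Since the $\mathcal{I}$-energy along each remainder equals $\big(\tfrac1m-\tfrac1{2b}\big)\|\cdot\|_{X_v}^m+o(1)$ and is thus bounded below, each $\mathcal{I}(u_j)\ge d_{\mathcal{I}}>0$, and $\mathcal{L}_\lambda(u_n)$ is bounded, the iteration terminates after finitely many steps $k\ge1$, yielding $(i)$, $(ii)$ and $(iii)$, i.e.\ $(A_2)$.

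\emph{Main obstacle.} The delicate point is the nonlocal bookkeeping behind the iteration: one has to show that the Choquard-type energy and its derivative split \emph{additively} along profiles that are pairwise far apart, i.e.\ that every Riesz-potential cross-term built out of two of $u,\ u_1(\cdot+w_{n,1}),\ u_2(\cdot+w_{n,2}),\dots$ tends to $0$ as the relevant translation parameters diverge; together with the correct identification of the translation-invariant limit problem and the uniform energy gap $d_{\mathcal{I}}>0$ that forces termination, this is the heart of the full proof.
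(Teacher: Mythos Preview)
Your proof follows essentially the same route as the paper's: boundedness gives a weak limit, Lemma~\ref{anbl} identifies it as a critical point of $\mathcal L_\lambda$, Lemma~\ref{nlocbl} and the Brezis--Lieb splitting of the norm yield the energy decomposition, Lemma~\ref{cc} drives the Lions dichotomy on the remainder, and the iteration terminates because each extracted profile contributes at least $d_{\mathcal I}>0$ while $\mathcal L_\lambda(u_n)$ stays bounded. The extra ingredients you mention (a.e.\ gradient convergence via monotonicity, vanishing of Riesz cross-terms between mutually escaping profiles) are standard and consistent with the paper's scheme.

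One point to flag: you dispose of the $\lambda$-term on the remainder $u_n^1$ by asserting that under \eqref{q} the embedding $X_v(\R^N)\hookrightarrow L^{2Nc/(2N-2\beta-\gamma)}(\R^N)$ is compact, so that this nonlocal term is weakly continuous. The paper, by contrast, simply writes \eqref{est6} directly from \eqref{bl2}--\eqref{bl3} without isolating the $\lambda$-contribution. Your justification is internally inconsistent with the rest of the argument: in the setting of Theorem~\ref{gstate} only $(V1)$ is assumed, and if the $c$-exponent embedding were compact on $\R^N$ then the $b$-exponent embedding (which lies in the same subcritical range under \eqref{p}) would be compact for the same reason, forcing $\Delta=0$ in your dichotomy step and collapsing the entire lemma to alternative $(A_1)$. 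So either compactness holds and there is nothing to decompose, or it fails and you cannot use it to kill the $\lambda$-term; the honest bookkeeping is to apply Lemma~\ref{nlocbl} once more with $(\gamma,\beta,c)$ and carry the $\lambda$-term through. The paper's own derivation of \eqref{est6} is equally silent on this, so this is a shared gap rather than a defect specific to your write-up.
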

\begin{proof}
As $(u_n)$ is a bounded sequence in $X_{v}(\R^N)$, there exists $u\in X_{v}(\R^N)$ such that, up to a subsequence, we have
\begin{equation}\label{firstconv}
\left\{
\begin{aligned}
u_n& \rightharpoonup u \quad\mbox{ weakly in }X_{v}(\R^N),\\
u_n &\rightharpoonup u\quad\mbox{ weakly in }L^s(\R^N),\; m_p\leq s\leq m_{p}^*,\\
u_n & \to u\quad\mbox{ a.e. in }\R^N.
\end{aligned}
\right.
\end{equation}
Using \eqref{firstconv} together with Lemma \ref{anbl}, we get  $${\mathcal L}_\lambda'(u)=0.$$ Hence, $u\in X_{v}(\R^N)$ is a solution of \eqref{fr}. Further, if $u_n\to u$ strongly in $X_{v}(\R^N)$ then $(A_1)$ holds and we are done.
	
\medskip
	
Next, let us assume that $(u_n)$ does not converge strongly to $u$ in $X_{v}(\R^N)$ and define $y_{n,1}=u_n-u$. Then $(y_{n,1})$ converges weakly (not strongly) to zero in $X_{v}(\R^N)$ and
\begin{equation}\label{bl2}
\|u_n\|_{X_{v}}^m=\|u\|_{X_{v}}^m+\|y_{n,1}\|_{X_{v}}^m+o(1).
\end{equation}
Also, by Lemma \ref{nlocbl} we have
\begin{equation}\label{bl3}
\int_{\R^N} \Big(|x|^{-\theta}*\frac{|u_n|^b}{|x|^\alpha}\Big)\frac{|u_n|^b}{|x|^\alpha}=\intr \Big(|x|^{-\theta}*\frac{|u|^b}{|x|^\alpha}\Big)\frac{|u|^b}{|x|^\alpha}+\intr \Big(|x|^{-\theta}*\frac{|y_{n, 1}|^b}{|x|^\alpha}\Big)\frac{|y_{n, 1}|^b}{|x|^\alpha}+o(1). 
\end{equation}
Using \eqref{bl2} and \eqref{bl3} we get
\begin{equation}\label{est6}
{\mathcal L}_\lambda(u_n)= {\mathcal L}_\lambda(u)+{\mathcal I}(y_{n,1})+o(1).
\end{equation}
Now, by Lemma \ref{anbl}, for any $h\in X_{v}(\R^{N})$, we have
\begin{equation}\label{est7}
\langle{\mathcal I}'(y_{n,1}), h\rangle=o(1).
\end{equation}
Further, using Lemma \ref{nlocbl} we get
$$
\begin{aligned}
0=\langle {\mathcal L}_\lambda'(u_n), u_n \rangle&=\langle {\mathcal L}_\lambda'(u),u\rangle+\langle {\mathcal I}'(y_{n,1}), y_{n,1} \rangle+o(1)\\
&=\langle{\mathcal I}'(y_{n,1}), y_{n,1}\rangle+o(1),
\end{aligned}
$$
which implies
\begin{equation}\label{est8}
\langle {\mathcal I}'(y_{n,1}), y_{n,1}\rangle=o(1).
\end{equation}
Next, we claim that 
$$
\Delta:=\limsup_{n\to \infty}\Big(\sup_{w\in \R^N} \int_{B_1(w)}|y_{n,1}|^{\frac{2Nb}{2N-2\alpha-\theta}}\Big)> 0.
$$
Let us assume that $\Delta= 0$. Using Lemma \ref{cc} we have $y_{n,1}\to 0$ strongly in $L^{\frac{2Nb}{2N-2\alpha-\theta}}(\R^N)$.  By double weighted Hardy-Littlewood-Sobolev inequality we get
$$
\int_{\R^N} \Big(|x|^{-\theta}*\frac{|y_{n,1}|^b}{|x|^\alpha}\Big)\frac{|y_{n,1}|^b}{|x|^\alpha}=o(1).
$$
Combining this together with \eqref{est8}, we deduce that $y_{n,1}\to 0$ strongly in $X_{v}(\R^{N})$, which is a contradiction. Hence, $\Delta> 0$.

As $\Delta>0$, one could find $w_{n,1}\in \R^N$ such that
\begin{equation}\label{est9}
\int_{B_1(w_{n,1})}|y_{n,1}|^{\frac{2Nb}{2N-2\alpha-\theta}}>\frac{\Delta}{2}.
\end{equation}
For the sequence $(y_{n,1}(\cdot+w_{n,1}))$, there exists $u_1\in X_{v}(\R^{N})$ such that, up to a subsequence, we have  
$$
\begin{aligned}
y_{n,1}(\cdot+w_{n,1})&\rightharpoonup u_1\quad\mbox{ weakly in } X_{v}(\R^{N}),\\
y_{n,1}(\cdot+w_{n,1})&\to u_1\quad\mbox{ strongly in } L_{loc}^{\frac{2Nb}{2N-2\alpha-\theta}}(\R^N),\\
y_{n,1}(\cdot+w_{n,1})&\to u_1\quad\mbox{ a.e. in } \R^N.
\end{aligned}
$$
Passing to the limit in \eqref{est9}, we have
$$
\int_{B_1(0)}|u_{1}|^{\frac{2Nb}{2N-2\alpha-\theta}}\geq \frac{\Delta}{2},
$$
hence, $u_1\not\equiv 0$. As $(y_{n,1})$ converges weakly to zero in $X_{v}(\R^{N})$, we get that $(w_{n,1})$ is unbounded. Therefore, passing to a subsequence, we could assume that $|w_{n,1}|\to \infty$. Using \eqref{est8}, we have ${\mathcal I}'(u_1)=0$, which further implies that $u_1$ is a nontrivial solution of \eqref{squ}.
Now, define
$$
y_{n,2}(x)=y_{n,1}(x)-u_1(x-w_{n,1}).
$$
Similarly as before, we get
$$
\|y_{n,1}\|^m=\|u_1\|^m+\|y_{n,2}\|^m+o(1).
$$
By Lemma \ref{nlocbl} we have
$$
\int_{\R^N} \Big(|x|^{-\theta}*\frac{|y_{n,1}|^b}{|x|^\alpha}\Big)\frac{|y_{n,1}|^b}{|x|^\alpha}=\int_{\R^N} \Big(|x|^{-\theta}*\frac{|u_1|^b}{|x|^\alpha}\Big)\frac{|u_1|^b}{|x|^\alpha}+\intr \Big(|x|^{-\theta}*\frac{|y_{n,2}|^b}{|x|^\alpha}\Big)\frac{|y_{n,2}|^b}{|x|^\alpha}+o(1). 
$$
Therefore,
$$
{\mathcal I}(y_{n,1})={\mathcal I}(u_1)+{\mathcal I}(y_{n,2})+o(1).
$$
By \eqref{est6} we get
$$
{\mathcal L}_\lambda (u_n)= {\mathcal L}_\lambda (u)+{\mathcal I}(u_1)+{\mathcal I}(y_{n,2})+o(1).
$$
Using the same approach as above, we get
$$
\langle {\mathcal I}'(y_{n,2}),h\rangle =o(1)\quad\mbox{ for any }h\in X_{v}(\R^{N})
$$
and
$$
\langle {\mathcal I}'(y_{n,2}), y_{n,2}\rangle =o(1).
$$
Now, if $(y_{n,2}) \to 0$ strongly, then we are done by taking $k=1$ in the Lemma \ref{compact}. Assume $y_{n,2}\rightharpoonup 0$ weakly (not strongly) in $X_{v}(\R^{N})$, then we could iterate the whole  process and in $k$ number of steps we find a set of sequences $(w_{n,j})\subset \R^N$, $1\leq j\leq k$ with 
$$
|w_{n,j}|\to \infty\quad\mbox{  and }\quad |w_{n,i}-w_{n,j}|\to \infty\quad\mbox{  as }\; n\to \infty, i\neq j
$$
and $k$ nontrivial solutions  $u_1$, $u_2$, $\dots$, $u_k\in X_{v}(\R^{N})$ of \eqref{squ} such that, by denoting 
$$
y_{n,j}(x):=y_{n,j-1}(x)-u_{j-1}(x-w_{n,j-1})\,, \quad 2\leq j\leq k,
$$ 
we get
$$
y_{n,j}(x+w_{n,j})\rightharpoonup u_j\quad\mbox{weakly in }\; X_{v}(\R^{N})
$$
and
$$
{\mathcal L}_\lambda(u_n)= {\mathcal L}_\lambda(u)+\sum_{j=1}^k {\mathcal I}(u_j)+{\mathcal I}(y_{n,k})+o(1).
$$
Now, as ${\mathcal L}_\lambda (u_n)$ is bounded and ${\mathcal I}(u_j)\geq d_{\mathcal I}$, one could iterate the process only a finite number of times and with this, we conclude our proof.
\end{proof}

\begin{cor}\label{corr1} 
Any $(PS)_c$ sequence of ${\mathcal L}_\lambda \! \mid_{{\mathcal N}_\lambda} $ is relatively compact for any $c\in (0,d_{\mathcal I})$ . 
\end{cor}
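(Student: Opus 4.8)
The plan is to read the corollary directly off the compactness dichotomy of Lemma~\ref{compact}: a Palais--Smale sequence at a level strictly below $d_{\mathcal I}$ leaves no room for any ``bubble'' of \eqref{squ} to split off, so only the strong-convergence alternative $(A_1)$ can occur.

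Concretely, let $(u_n)\subset{\mathcal N}_\lambda$ be a $(PS)_c$ sequence for ${\mathcal L}_\lambda\!\mid_{{\mathcal N}_\lambda}$ at a level $c\in(0,d_{\mathcal I})$; to avoid clashing with the exponent $c$ fixed throughout Section~3, I will denote this level by $c_*$ below. By Lemma~\ref{nehari1} the functional ${\mathcal L}_\lambda\!\mid_{{\mathcal N}_\lambda}$ is coercive, so boundedness of $({\mathcal L}_\lambda(u_n))$ forces $(u_n)$ to be bounded in $X_v(\R^N)$. Hence hypotheses (a)--(b) of Lemma~\ref{compact} are met and, after passing to a subsequence, either $(A_1)$ or $(A_2)$ holds, with weak limit $u$ a solution of \eqref{fr}.

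Suppose $(A_2)$ were the case. Then there are an integer $k\geq1$ and nontrivial solutions $u_1,\dots,u_k$ of \eqref{squ} with, by part (iii),
$$
c_*=\lim_{n\to\infty}{\mathcal L}_\lambda(u_n)={\mathcal L}_\lambda(u)+\sum_{j=1}^k{\mathcal I}(u_j).
$$
Each $u_j$ is a nonzero critical point of ${\mathcal I}$, hence lies on ${\mathcal N}_{\mathcal I}$ and therefore ${\mathcal I}(u_j)\geq d_{\mathcal I}$. The weak limit $u$ is a free critical point of ${\mathcal L}_\lambda$: if $u=0$ then ${\mathcal L}_\lambda(u)=0$, while if $u\neq0$ then $\langle{\mathcal L}_\lambda'(u),u\rangle=0$ so $u\in{\mathcal N}_\lambda$, and the identity computed in Lemma~\ref{nehari1}, together with $b>c>\tfrac m2$ (here $c$ the exponent), gives ${\mathcal L}_\lambda(u)\geq(\tfrac1m-\tfrac1{2c})\|u\|_{X_v}^m\geq0$. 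In either case ${\mathcal L}_\lambda(u)\geq0$, whence $c_*\geq k\,d_{\mathcal I}\geq d_{\mathcal I}$, contradicting $c_*<d_{\mathcal I}$. Thus $(A_2)$ is excluded, $(A_1)$ holds, and $u_n\to u$ strongly in $X_v(\R^N)$ along the chosen subsequence; since this reasoning applies to every subsequence, $(u_n)$ is relatively compact.

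I do not anticipate a serious obstacle here: the proof is a short energy-accounting argument built entirely on results already in hand --- the splitting (iii) of Lemma~\ref{compact} and the strict positivity of ${\mathcal L}_\lambda$ on ${\mathcal N}_\lambda$ from Lemma~\ref{nehari1}. The only point that needs a little care is purely notational, namely the double use of the symbol $c$ for the $(PS)$ level and for the exponent of Section~3, which I would resolve by renaming one of them as above.
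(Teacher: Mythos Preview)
Your proposal is correct and follows essentially the same approach as the paper: apply the dichotomy of Lemma~\ref{compact} and rule out alternative $(A_2)$ by observing that each bubble $u_j$ lies on ${\mathcal N}_{\mathcal I}$ and hence contributes at least $d_{\mathcal I}$ to the limiting energy. Your version is in fact more complete than the paper's, which is quite terse---you explicitly verify that ${\mathcal L}_\lambda(u)\geq 0$ for the weak limit (distinguishing $u=0$ from $u\in{\mathcal N}_\lambda$) and you flag the notational collision between the $(PS)$ level and the exponent $c$, both of which the paper glosses over.
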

\begin{proof}
Let us assume that $(u_n)$ is a $(PS)_c$ sequence of ${\mathcal L}_\lambda \! \mid_{{\mathcal N}_\lambda}$. Then, by Lemma \ref{compact} we have ${\mathcal I}(u_j)\geq d_{\mathcal I}$ and upto a subsequence $u_n\to u$ strongly in $X_{v}(\R^{N})$ and hence, $u$ is a solution of \eqref{fr}. 
\end{proof}

\subsection{Completion of the Proof of Theorem $1.1$}

We need the following result in order to complete the proof of Theorem \ref{gstate}.

\begin{lemma}\label{flg}
$$
d_{\lambda}<d_{\mathcal I}.
$$
\end{lemma}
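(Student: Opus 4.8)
The plan is to show that the groundstate energy $d_\lambda$ of the full functional $\mathcal{L}_\lambda$ lies strictly below the groundstate energy $d_{\mathcal I}$ of the unperturbed functional $\mathcal{I}$. The key point is that the perturbation term (the one with coefficient $\lambda>0$) is strictly positive when evaluated on any nontrivial $u$, so adding it to $\mathcal{I}$ and then re-projecting onto the corresponding Nehari manifold can only decrease the minimal energy.

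First I would take a minimizer (or a minimizing sequence) $w$ for $d_{\mathcal I}$, that is, $w\in\mathcal{N}_{\mathcal I}$ with $\mathcal{I}(w)=d_{\mathcal I}$; existence of such a $w$ follows from the compactness analysis in Corollary~\ref{corr1} applied to the limiting problem (or one argues directly with a near-minimizer, which suffices since we only need a strict inequality). Next I would consider the fibering map $t\mapsto \mathcal{L}_\lambda(tw)$ for $t>0$; since $b>c>\tfrac{m}{2}$, the equation $\langle\mathcal{L}_\lambda'(tw),tw\rangle=0$ has a unique positive root $t_\lambda=t(w)$, so $t_\lambda w\in\mathcal{N}_\lambda$ and hence $d_\lambda\le \mathcal{L}_\lambda(t_\lambda w)$. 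The computation then runs:
\begin{equation*}
d_\lambda\le \mathcal{L}_\lambda(t_\lambda w)=\mathcal{I}(t_\lambda w)-\frac{\lambda}{2c}\,t_\lambda^{2c}\int_{\R^{N}}\Big(|x|^{-\gamma}*\frac{|w|^{c}}{|x|^{\beta}}\Big)\frac{|w|^{c}}{|x|^{\beta}}\le \mathcal{I}(t_\lambda w)\le \max_{t>0}\mathcal{I}(tw)=\mathcal{I}(w)=d_{\mathcal I},
\end{equation*}
where the last equality on the maximum uses that $w\in\mathcal{N}_{\mathcal I}$ is precisely the unique maximum point of $t\mapsto\mathcal{I}(tw)$ (again by $b>\tfrac m2$). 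To upgrade the first $\le$ to a strict $<$, I would observe that the nonlocal perturbation integral $\int(|x|^{-\gamma}*|w|^c|x|^{-\beta})|w|^c|x|^{-\beta}$ is strictly positive for $w\not\equiv 0$ — this is where the hypotheses $\gamma\in(0,N)$, $\gamma+2\beta<N$ and the Stein–Weiss inequality \eqref{hli} guarantee the integral is finite and the kernel is positive, so positivity is immediate — and that $t_\lambda>0$ is bounded away from $0$. Hence the subtracted term is strictly positive, giving $d_\lambda<d_{\mathcal I}$.

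The main obstacle is the bookkeeping around which minimizer to use: strictly speaking one should know $d_{\mathcal I}$ is attained before plugging in $w$, but this is exactly the content of the compactness machinery (Lemma~\ref{compact} and Corollary~\ref{corr1}) applied to $\mathcal{I}$ itself, or alternatively one works with an $\varepsilon$-minimizing sequence $w_\varepsilon$, controls $t(w_\varepsilon)$ from below uniformly via the estimate \eqref{cnot}-type argument, and passes to the limit in the strict inequality — the uniform lower bound on $t(w_\varepsilon)$ together with a uniform lower bound on the perturbation integral along the minimizing sequence (which cannot vanish without forcing $w_\varepsilon\to 0$, contradicting \eqref{cnot}) keeps the gap open. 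A minor technical point to check is monotonicity/uniqueness of the fibering root for both $\mathcal{I}$ and $\mathcal{L}_\lambda$ under $b>c>\tfrac m2$, but this has already been recorded in the paragraph preceding Lemma~\ref{nehari1}. The rest is the routine Stein–Weiss estimate to see all integrals are finite and positive.
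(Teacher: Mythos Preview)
Your argument is correct and follows the same overall strategy as the paper: take a groundstate $P$ (your $w$) of the limiting functional $\mathcal{I}$, project it onto $\mathcal{N}_\lambda$ via the unique $t_\lambda>0$, and compare energies. The difference lies only in how the strict inequality is extracted. The paper first shows $t_\lambda<1$ by comparing the two Nehari identities, then rewrites $\mathcal{L}_\lambda(t_\lambda P)$ using the $\mathcal{N}_\lambda$-constraint as $t_\lambda^{m}\bigl(\tfrac{1}{m}-\tfrac{1}{2c}\bigr)\|P\|^{m}+t_\lambda^{2b}\bigl(\tfrac{1}{2c}-\tfrac{1}{2b}\bigr)\|P\|^{m}$ and uses $t_\lambda<1$ to bound this strictly by $\bigl(\tfrac{1}{m}-\tfrac{1}{2b}\bigr)\|P\|^{m}=\mathcal{I}(P)=d_{\mathcal I}$. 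Your route is more direct: you split $\mathcal{L}_\lambda(t_\lambda w)=\mathcal{I}(t_\lambda w)-(\text{strictly positive perturbation})$ and then invoke $\mathcal{I}(t_\lambda w)\le\max_{t>0}\mathcal{I}(tw)=\mathcal{I}(w)$, bypassing the need to locate $t_\lambda$ relative to $1$. Both work; yours is slightly cleaner, while the paper's version makes the role of $t_\lambda<1$ explicit. On the existence of the minimizer for $d_{\mathcal I}$, the paper simply cites \cite{B2012,CW2009} rather than rerunning the compactness argument, so your concern there is handled by an external reference.
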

\begin{proof}
Let us assume that $P\in X_{v}(\R^{N})$ is a groundstate solution of \eqref{squ} and by \cite{B2012, CW2009} we know that such a groundstate solution exists. Let us denote by $tP$, the projection of $P$ on ${\mathcal N_\lambda}$, that is, $t=t(P)>0$ is the unique real number such that $tP\in {\mathcal N_\lambda}$. Since, $P\in {\mathcal N}_{\mathcal I}$ and $tP\in {\mathcal N_\lambda}$, we have
\begin{equation}\label{g1}
||P||^m= \int_{\R^N} \Big(|x|^{-\theta}*\frac{|P|^b}{|x|^\alpha}\Big)\frac{|P|^b}{|x|^\alpha}
\end{equation}
and
$$
t^m\|P\|^m=t^{2b}\int_{\R^N} \Big(|x|^{-\theta}*\frac{|P|^b}{|x|^\alpha}\Big)\frac{|P|^b}{|x|^\alpha}+ \lambda t^{2c}\int_{\R^N} \Big(|x|^{-\gamma}*\frac{|P|^c}{|x|^\beta}\Big)\frac{|P|^c}{|x|^\beta}.
$$
Therefore, we get $t<1$. Now,
	
\begin{equation*}
\begin{aligned}
d_\lambda \leq {\mathcal L}_\lambda(tP)&=\frac{1}{m}t^{m}\|P\|^{m}-\frac{1}{2b}t^{2b}\int_{\R^N} \Big(|x|^{-\theta}*\frac{|P|^b}{|x|^\alpha}\Big)\frac{|P|^b}{|x|^\alpha}- \frac{\lambda}{2c}t^{2c} \int_{\R^N} \Big(|x|^{-\gamma}*\frac{|P|^c}{|x|^\beta}\Big)\frac{|P|^c}{|x|^\beta}\\
&= \Big(\frac{t^{m}}{m}-\frac{t^{2b}}{2b}\Big)\|P\|^{m}-\frac{1}{2c}\Big(t^m||P||^m-t^{2b}\int_{\R^N} \Big(|x|^{-\theta}*\frac{|P|^b}{|x|^\alpha}\Big)\frac{|P|^b}{|x|^\alpha}\Big)\\
&= t^{m} \Big(\frac{1}{m}-\frac{1}{2c}\Big)\|P\|^{m}+t^{2b}\Big(\frac{1}{2c}-\frac{1}{2b}\Big)\|P\|^{m}\\
&< \Big(\frac{1}{m}-\frac{1}{2c}\Big)\|P\|^{m}+\Big(\frac{1}{2c}-\frac{1}{2b}\Big)\|P\|^{m}\\
&< \Big(\frac{1}{m}-\frac{1}{2b}\Big)\|P\|^{m} ={\mathcal I}(P)= d_{\mathcal I},
\end{aligned}
\end{equation*}
as required.
\end{proof}

Next, we use the Ekeland variational principle, that is, for any $n\geq 1$ there exists $(u_n) \in {\mathcal N}_\lambda$ such that
\begin{equation*}
\begin{aligned}
{\mathcal L}_\lambda(u_n)&\leq d_\lambda+\frac{1}{n} &&\quad\mbox{ for all } n\geq 1,\\
{\mathcal L}_\lambda(u_n)&\leq {\mathcal L}_\lambda(\tilde{u})+\frac{1}{n}\|\tilde{u}-u_n\| &&\quad\mbox{ for all } \tilde{u} \in {\mathcal N}_\lambda \;\;,n\geq 1.
\end{aligned}
\end{equation*}
Further, one could easily deduce that $(u_n) \in {\mathcal N}_\lambda$ is a $(PS)_{d_\lambda}$ sequence for ${\mathcal L}_\lambda$ on ${\mathcal N}_\lambda$. Then, by Lemma \ref{flg} and Corollary \ref{corr1} we have that up to a subsequence $u_n \to u$ strongly in $X_{v}(\R^{N})$ which is a groundstate of ${\mathcal L}_\lambda$.

\section{Proof of Theorem \ref{signc}}

In this section, we are concerned the existence of a least energy sign-changing solution of \eqref{fr}. 

\subsection{Proof of Theorem }

\begin{lemma}\label{frl1}
Let $N>m\geq 2$, $b>c>m$ and $\lambda \in \R$. There exists a unique pair $(\tau_0, \delta_0)\in (0, \infty)\times (0, \infty)$ ,for any $u \in  X_{v}(\R^{N})$ and $u^{\pm} \neq 0$, such that $\tau_0 u^{+}+\delta_0 u^{-} \in {\cal \overline{N}}_\lambda$. Also, if $u\in {\cal \overline{N}}_\lambda$ then for all $\tau$, $\delta \geq 0$ we have ${\mathcal L}_\lambda(u)\geq {\mathcal L}_\lambda(\tau u^{+}+\delta u^{-})$.
\end{lemma}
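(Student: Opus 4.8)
The plan is to reduce the problem to a fixed-point/calculus analysis of the two-variable map
\[
\Phi(\tau,\delta) = {\mathcal L}_\lambda(\tau u^{+} + \delta u^{-}), \qquad (\tau,\delta)\in [0,\infty)\times[0,\infty),
\]
for a fixed $u$ with $u^{\pm}\neq 0$. Observe first that since $u^{+}$ and $u^{-}$ have disjoint supports, $\|\tau u^{+}+\delta u^{-}\|_{X_v}^m = \tau^m\|u^{+}\|_{X_v}^m + \delta^m\|u^{-}\|_{X_v}^m$. The condition $\tau u^{+}+\delta u^{-}\in{\cal \overline N}_\lambda$ is exactly the system
\[
\langle {\mathcal L}_\lambda'(\tau u^{+}+\delta u^{-}), \tau u^{+}\rangle = 0, \qquad \langle {\mathcal L}_\lambda'(\tau u^{+}+\delta u^{-}), \delta u^{-}\rangle = 0,
\]
which, expanding as in the formula for $\langle{\mathcal L}_\lambda'(u),u^{\pm}\rangle$ displayed before Theorem \ref{signc}, becomes a pair of equations of the schematic form
\[
\tau^m A_+ = \tau^{2b}B_{++} + \tau^b\delta^b B_{+-} + \lambda\big(\tau^{2c}C_{++} + \tau^c\delta^c C_{+-}\big),
\]
and symmetrically for the $u^{-}$ equation, where $A_\pm = \|u^{\pm}\|_{X_v}^m>0$ and the $B$'s, $C$'s are the (nonnegative) weighted nonlocal integrals of $u^{\pm}$. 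Note $\langle{\mathcal L}_\lambda'(v),v^{\pm}\rangle = \partial_\tau\Phi(\tau,\delta)\cdot\tau$ evaluated appropriately, so a critical point of $\Phi$ in the open quadrant gives a point of ${\cal \overline N}_\lambda$.

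First I would establish \emph{existence}. Since $b>c>m$ and $m\geq 2$ so $2b>2c>2m\geq m$, for fixed $\delta$ the function $\tau\mapsto\Phi(\tau,\delta)$ starts at a value that behaves like $\tfrac1m(\tau^m A_+ + \delta^m A_-)$ near $\tau=0$ (strictly increasing, since the $\tau^{2b},\tau^{2c}$ terms are higher order) and tends to $-\infty$ as $\tau\to\infty$ because the highest-order term $-\tfrac{1}{2b}\tau^{2b}B_{++}$ dominates (here one uses $B_{++}>0$, which holds because $u^{+}\not\equiv 0$ and the weighted Hardy–Littlewood–Sobolev integrand is strictly positive); the $\lambda$-term is of order $\tau^{2c}$ with $2c<2b$, hence lower order regardless of the sign of $\lambda$. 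The same holds in $\delta$. One then shows $\Phi$ has an interior maximum: the superlevel set $\{\Phi\geq\Phi(\epsilon,\epsilon)\}$ for small $\epsilon>0$ is bounded (again by the $-\tau^{2b},-\delta^{2b}$ domination, and by a cross-term estimate using $B_{+-}\geq 0$) and its closure meets the open quadrant, so $\Phi$ attains a maximum at some $(\tau_0,\delta_0)$ with $\tau_0,\delta_0>0$; this is a critical point, giving $\tau_0 u^{+}+\delta_0 u^{-}\in{\cal \overline N}_\lambda$. The second assertion, ${\mathcal L}_\lambda(u)\geq{\mathcal L}_\lambda(\tau u^{+}+\delta u^{-})$ for $u\in{\cal \overline N}_\lambda$ and all $\tau,\delta\geq 0$, then follows because if $u\in{\cal \overline N}_\lambda$ the pair $(1,1)$ solves the defining system, and by uniqueness (next paragraph) it must be \emph{the} interior critical point, which is the global maximum of $\Phi$ on $[0,\infty)^2$.

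The main obstacle is \emph{uniqueness} of the pair $(\tau_0,\delta_0)$. The cleanest route is to divide the two Nehari equations by $\tau^m$ and $\delta^m$ respectively, writing them as
\[
A_+ = \tau^{2b-m}B_{++} + \tau^{b-m}\delta^b B_{+-} + \lambda\big(\tau^{2c-m}C_{++} + \tau^{c-m}\delta^c C_{+-}\big),
\]
and likewise for $A_-$. Because $b>c>m$ (so $2b-m>b-m>0$, $2c-m>c-m>0$), for $\lambda\geq 0$ the right-hand side of the first equation is strictly increasing in $\tau$ (for each fixed $\delta$) and nondecreasing in $\delta$, which lets one define $\tau=\tau(\delta)$ implicitly as a strictly decreasing function, and symmetrically $\delta=\delta(\tau)$ strictly decreasing; composing gives a contraction-type monotonicity forcing a unique intersection. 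For $\lambda<0$ one needs the extra structural hypotheses of Theorem \ref{signc} — in particular $\gamma+2\beta<m$ and the range restrictions \eqref{q},\eqref{q1} on $c$, together with $b>c>m$ — to control the sign of the $\lambda$-contribution relative to the leading $B$-terms; the argument here is the delicate one, and I would carry it out by showing that the Jacobian of the map sending $(\tau,\delta)$ to the pair of right-hand sides minus $(A_+,A_-)$ is everywhere nonsingular on the open quadrant with a fixed sign pattern (strictly diagonally dominant after the division by $\tau^m,\delta^m$), so that the solution set is a single point by a global inverse-function / index argument. Once uniqueness is in hand, the monotonicity inequality ${\mathcal L}_\lambda(u)\geq{\mathcal L}_\lambda(\tau u^{+}+\delta u^{-})$ is immediate from the maximum characterization.
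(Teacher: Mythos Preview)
Your existence argument (interior maximum of $\Phi$ via growth at infinity and near the axes) is essentially the paper's, though the paper records it through $\partial\varphi/\partial\tau\to+\infty$ as $\tau\searrow 0$ and $\varphi\to-\infty$ along rays rather than a superlevel-set argument. The real divergence is in how uniqueness is handled.

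The paper does \emph{not} analyse the Nehari system directly. It reparametrises, setting
\[
\varphi(\tau,\delta)\;:=\;\mathcal{L}_\lambda\!\big(\tau^{1/(2b)}u^{+}+\delta^{1/(2b)}u^{-}\big),
\]
so that every exponent appearing in the expansion is at most $1$ (namely $\tfrac{m}{2b},\tfrac{c}{b},\tfrac{c}{2b},1,\tfrac12$, all because $b>c>m$). The paper then asserts that $\varphi$ is strictly concave on $(0,\infty)^2$; a strictly concave function has at most one critical point, and together with the boundary behaviour this forces exactly one interior maximum. The bijection $t\mapsto t^{1/(2b)}$ transports this back to the original variables, and both uniqueness and the inequality $\mathcal{L}_\lambda(u)\geq\mathcal{L}_\lambda(\tau u^{+}+\delta u^{-})$ for $u\in\overline{\mathcal N}_\lambda$ fall out in one stroke, with no case distinction on the sign of $\lambda$.

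Your route has a genuine gap at the uniqueness step. For $\lambda\geq 0$, dividing by $\tau^m,\delta^m$ does yield two implicit curves $\tau=\tau(\delta)$ and $\delta=\delta(\tau)$ that are each strictly decreasing, but two strictly decreasing curves in the plane can intersect more than once; the phrase ``composing gives a contraction-type monotonicity forcing a unique intersection'' is not a proof without an actual contraction estimate or a globally sign-definite Jacobian, neither of which you supply. For $\lambda<0$ you only sketch a diagonal-dominance argument and invoke the extra hypotheses $\gamma+2\beta<m$ of Theorem~\ref{signc}, which are not assumed in Lemma~\ref{frl1}. The concavity-after-substitution device is precisely the idea you are missing: it replaces the entire monotonicity/Jacobian case analysis by a single observation about the shape of $\varphi$.
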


\begin{proof}
In order to prove this lemma, we follow the idea developed in \cite{VX2017}. Define the function $ \varphi: [0, \infty)\times [0, \infty)\rightarrow \R$ by
\begin{equation*}
\begin{aligned}
\varphi(\tau, \delta)&= {\mathcal L}_\lambda(\tau^{\frac{1}{2b}} u^{+}+\delta^{\frac{1}{2b}} u^{-})\\
&= \frac{\tau^{\frac{m}{2b}}}{m}\|u^{+}\|_{X_{v}}^{m}+\frac{\delta^{\frac{m}{2b}}}{m}\|u^{-}\|_{X_{v}}^{m}-\lambda\frac{\tau^{\frac{c}{b}}}{2c}\int_{\R^{N}}\Big(|x|^{-\gamma}*\frac{(u^{+})^{c}}{|x|^\beta}\Big)\frac{(u^{+})^{c}}{|x|^\beta}-\lambda\frac{\delta^{\frac{c}{b}}}{2c}\int_{\R^{N}}\Big(|x|^{-\gamma}*\frac{(u^{-})^{c}}{|x|^\beta}\Big)\frac{(u^{-})^{c}}{|x|^\beta}\\
&-\lambda\frac{\tau^{\frac{c}{2b}}\delta^{\frac{c}{2b}}}{2c}\int_{\R^{N}}\Big(|x|^{-\gamma}*\frac{(u^{+})^{c}}{|x|^\beta}\Big)\frac{(u^{-})^{c}}{|x|^\beta}-\frac{\tau}{2b}\int_{\R^{N}}\Big(|x|^{-\theta}*\frac{(u^{+})^{b}}{|x|^\alpha}\Big)\frac{(u^{+})^{b}}{|x|^\alpha}\\ &-\frac{\delta}{2b}\int_{\R^{N}}\Big(|x|^{-\theta}*\frac{(u^{-})^{b}}{|x|^\alpha}\Big)\frac{(u^{-})^{b}}{|x|^\alpha}-\frac{\tau^{\frac{1}{2}}\delta^{\frac{1}{2}}}{2b}\int_{\R^{N}}\Big(|x|^{-\theta}*\frac{(u^{+})^{b}}{|x|^\alpha}\Big)\frac{(u^{-})^{b}}{|x|^\alpha}.
\end{aligned}
\end{equation*}

One could observe that $\varphi$ is strictly concave. Hence, $\varphi$ has at most one maximum point. On the other hand we have
\begin{equation}\label{fr2}
\lim_{\tau \rightarrow \infty}\varphi(\tau, \delta)= -\infty \mbox{ for all }\delta \geq 0 \quad\mbox{ and } \quad\mbox{ } \lim_{\delta \rightarrow \infty}\varphi(\tau, \delta)= -\infty \mbox{ for all }\tau \geq 0,
\end{equation}
and one could easily check that
\begin{equation}\label{fr3}
\lim_{\tau \searrow 0}\frac{\partial{\varphi}}{\partial{\tau}}(\tau, \delta)= \infty \mbox{ for all }\delta> 0 \quad\mbox{ and }  \lim_{\delta \searrow 0}\frac{\partial{\varphi}}{\partial{\delta}}(\tau, \delta)= \infty \mbox{ for all }\tau> 0.
\end{equation}
Therefore, by \eqref{fr2} and \eqref{fr3} maximum cannot be achieved at the boundary. Hence, $\varphi$ has exactly one maximum point $(\tau_0, \delta_0)\in (0, \infty)\times (0, \infty)$.

\end{proof}

\medskip

Next, we divide our proof into two steps.

\medskip

\noindent \text{Step 1.}{\it \;\; The energy level $\overline{d_\lambda}>0$ is achieved by some $\sigma \in {\cal \overline{N}}_{\lambda}$.}

\medskip

Let us assume that $(u_n)\subset {\cal \overline{N}}_{\lambda}$ be a minimizing sequence for $\overline{d_\lambda}$. Note that 
\begin{equation*}
\begin{aligned}
{\mathcal L}_\lambda(u_{n})&= {\mathcal L}_\lambda(u_{n})-\frac{1}{2c}\langle {\mathcal L}_\lambda'(u_{n}), u_{n}\rangle \\
&= \Big(\frac{1}{m}-\frac{1}{2c}\Big)\|u_{n}\|_{X_{v}}^{m}+\Big(\frac{1}{2c}-\frac{1}{2b}\Big)\int_{\R^{N}}\Big(|x|^{-\theta}*\frac{|u|^{b}}{|x|^{\alpha}}\Big)\frac{|u|^{b}}{|x|^\alpha}\\
&\geq \Big(\frac{1}{m}-\frac{1}{2c}\Big)\|u_{n}\|_{X_{v}}^{m}\\
&\geq C\|u_{n}\|_{X_{v}}^{m},
\end{aligned}
\end{equation*}
for some positive constant $C_1>0$. Hence, for $C_{2}> 0$ we have 
$$
\|u_{n}\|_{X_{v}}^{m}\leq C_{2}{\mathcal L}_{\lambda}(u_{n})\leq M,
$$
that is, $(u_n)$ is bounded in $X_{v}(\R^{N})$. This further implies that $(u_{n}^{+})$ and $(u_{n}^{-})$ are also bounded in $X_{v}(\R^{N})$. Therefore, passing to a subsequence, there exists $u^{+}$, $u^{-}\in X_{v}(\R^{N})$ such that
$$
u_{n}^{+}\rightharpoonup u^{+} \mbox{ and } u_{n}^{-}\rightharpoonup u^{-} \quad\mbox{ weakly in } X_{v}(\R^{N}).
$$
As $b$, $c>m \geq 2$ satisfy \eqref{p} and \eqref{q} or \eqref{p1} and \eqref{q1}, we have that the embeddings $X_{v}(\R^{N})\hookrightarrow L^{\frac{2Nb}{2N-2\alpha-\theta}}(\R^{N})$ and  $X_{v}(\R^{N})\hookrightarrow L^{\frac{2Nc}{2N-2\beta-\gamma}}(\R^{N})$ are compact. Thus,
\begin{equation}\label{m1}
u_{n}^{\pm} \rightarrow u^{\pm} \quad\mbox{ strongly in } L^{\frac{2Nb}{2N-2\alpha-\theta}}(\R^{N}) \cap L^{\frac{2Nc}{2N-2\beta-\gamma}}(\R^{N}).
\end{equation}
Using the double weighted Hardy-Littlewood-Sobolev inequality, we have
\begin{equation*}
\begin{aligned}
C\Big(\|u_{n}^{\pm}\|_{L^{\frac{2Nb}{2N-2\alpha-\theta}}}^{m}+\|u_{n}^{\pm}\|_{L^{\frac{2Nc}{2N-2\beta-\gamma}}}^{m}\Big)&\leq \|u_{n}^{\pm}\|_{X_{v}}^{m}\\
&= \int_{\R^{N}}\Big(|x|^{-\theta}*\frac{|u_n|^{b}}{|x|^\alpha}\Big)\frac{|u_{n}^{\pm}|^{b}}{|x|^\alpha}+|\lambda|\int_{\R^{N}}\Big(|x|^{-\gamma}*\frac{|u_n|^{c}}{|x|^\beta}\Big)\frac{|u_{n}^{\pm}|^{c}}{|x|^\beta}\\
&\leq C\Big(\|u_{n}^{\pm}\|_{L^{\frac{2Nb}{2N-2\alpha-\theta}}}^{b}+\|u_{n}^{\pm}\|_{L^{\frac{2Nc}{2N-2\beta-\gamma}}}^{c}\Big)\\
&\leq C\Big(\|u_{n}^{\pm}\|_{L^{\frac{2Nb}{2N-2\alpha-\theta}}}^{m}+\|u_{n}^{\pm}\|_{L^{\frac{2Nc}{2N-2\beta-\gamma}}}^{m}\Big)\Big(\|u_{n}^{\pm}\|_{L^{\frac{2Nb}{2N-2\alpha-\theta}}}^{b-m}+||u_{n}^{\pm}||_{L^{\frac{2Nc}{2N-2\beta-\gamma}}}^{c-m}\Big).
\end{aligned}
\end{equation*}
As $u_{n}^{\pm}\neq 0$, we get
\begin{equation}\label{m2}
\|u_{n}^{\pm}\|_{L^{\frac{2Nb}{2N-2\alpha-\theta}}}^{b-m}+\|u_{n}^{\pm}\|_{L^{\frac{2Nc}{2N-2\beta-\gamma}}}^{c-m}\geq C> 0 \quad\mbox{ for all } n\geq 1.
\end{equation}
Therefore, using \eqref{m1} and \eqref{m2} one could have that $u^{\pm} \neq 0$. Next, using \eqref{m1} together with double weighted Hardy-Littlewood-Sobolev inequality, we deduce
\begin{equation*}
\begin{aligned}
&\int_{\R^{N}}\Big(|x|^{-\theta}*\frac{(u_{n}^{\pm})^{b}}{|x|^\alpha}\Big)\frac{(u_{n}^{\pm})^{b}}{|x|^\alpha} &&\rightarrow \int_{\R^{N}}\Big(|x|^{-\theta}*\frac{(u^{\pm})^{b}}{|x|^\alpha}\Big)\frac{(u^{\pm})^{b}}{|x|^\alpha},\\
&\int_{\R^{N}}\Big(|x|^{-\theta}*\frac{(u_{n}^{+})^{b}}{|x|^\alpha}\Big)\frac{(u_{n}^{-})^{b}}{|x|^\alpha} &&\rightarrow \int_{\R^{N}}\Big(|x|^{-\theta}*\frac{(u^{+})^{b}}{|x|^\alpha}\Big)\frac{(u^{-})^{b}}{|x|^\alpha},\\
&\int_{\R^{N}}\Big(|x|^{-\gamma}*\frac{(u_n^{\pm})^{c}}{|x|^\beta}\Big)\frac{(u_n^{\pm})^{c}}{|x|^\beta} &&\rightarrow \int_{\R^{N}}\Big(|x|^{-\gamma}*\frac{(u^{\pm})^{c}}{|x|^\beta}\Big)\frac{(u^{\pm})^{c}}{|x|^\beta},
\end{aligned}
\end{equation*}
and
\begin{equation*}
\int_{\R^{N}}\Big(|x|^{-\gamma}*\frac{(u_n^{+})^{c}}{|x|^\beta}\Big)\frac{(u_n^{-})^{c}}{|x|^\beta}\rightarrow \int_{\R^{N}}\Big(|x|^{-\gamma}*\frac{(u^{+})^{c}}{|x|^\beta}\Big)\frac{(u^{-})^{c}}{|x|^\beta}.
\end{equation*}
Next, by using Lemma \ref{frl1}, we get that there exists a unique pair $(\tau_{0}, \delta_{0})$ such that $\tau_{0} u^{+}+\delta_{0} u^{-}\in {\cal \overline{N}}_{\lambda}$. Further, using the fact that the norm $\|.\|_{X_{v}}$ is weakly lower semi-continuous, we get
\begin{equation*}
\begin{aligned}
\overline{d_\lambda} \leq {\mathcal L}_\lambda(\tau_{0} u^{+}+\delta_{0} u^{-})&\leq \liminf_{n\rightarrow \infty} {\mathcal L}_\lambda(\tau_{0} u^{+}+\delta_{0} u^{-})\\
&\leq \limsup_{n\rightarrow \infty} {\mathcal L}_\lambda(\tau_{0} u^{+}+\delta_{0} u^{-})\\
&\leq \lim_{n\rightarrow \infty}{\mathcal L}_\lambda(u_{n})\\
&= \overline{d_\lambda}.
\end{aligned}
\end{equation*}
We conclude by taking $\sigma= \tau_{0} u^{+}+\delta_{0} u^{-}\in {\cal \overline{N}}_\lambda$.

\medskip

\noindent \text{Step 2.}{ \it \;\;  ${\mathcal L}_\lambda'(\sigma)=0$, that is, $\sigma \in {\cal \overline{N}}_\lambda$ is the critical point of ${\mathcal L}_\lambda:X_{v}(\R^{N}) \rightarrow \R$.
}

\medskip

Say $\sigma$ is not a critical point of ${\mathcal L}_{\lambda}$, then there exists $\kappa\in C_{c}^{\infty}(\R^N)$ such that $ \langle {\mathcal L}_\lambda'(\sigma), \kappa \rangle= -2.$ As ${\mathcal L}_{\lambda}$ is continuous and differentiable, so there exists $\zeta>0$ small such that
\begin{equation}\label{fr4}
\langle {\mathcal L}_\lambda'(\tau u^{+}+\delta u^{-}+\omega \bar{\sigma}), \bar{\sigma} \rangle \;\; \leq -1 \quad\mbox{ if } (\tau- \tau_{0})^{2}+(\delta- \delta_0)^{2}\leq \zeta^{2} \mbox{ and } 0\leq \omega \leq \zeta.
\end{equation}
Next, let us asumme that $D\subset \R^{2} $ is an open disc of radius $\zeta>0$ centered at $(\tau_0, \delta_0)$ and define a continuous function $\varPhi: D\rightarrow [0, 1]$ by 
\begin{equation*}
\varPhi(\tau, \delta)= \left\{\begin{array}{cc}1\quad\mbox{ if }(\tau- \tau_{0})^{2}+(\delta- \delta_0)^{2}\leq \frac{\zeta^{2}}{16}, \\ 0\quad\mbox{ if }(\tau- \tau_{0})^{2}+(\delta- \delta_0)^{2}\geq \frac{\zeta^{2}}{4}.\end{array} \right.
\end{equation*}
Also, let us define a continuous map $T: D\rightarrow X_{v}(\R^{N})$ as 
\begin{equation*}
T(\tau, \delta)= \tau u^{+}+\delta u^{-}+\zeta \varPhi(\tau, \delta)\bar{\sigma} \quad\mbox{ for all } (\tau, \delta)\in D
\end{equation*}
and $Q: D\rightarrow \R^{2}$ as
\begin{equation*}
Q(\tau, \delta)= (\langle {\mathcal L}_\lambda'(T(\tau, \delta)), T(\tau, \delta)^{+}\rangle, \langle {\mathcal L}_\lambda'(T(\tau, \delta)), T(\tau, \delta)^{-}\rangle) \quad\mbox{ for all }(\tau, \delta)\in D. 
\end{equation*}
As the mapping $u \mapsto u^{+}$ is continuous in $X_{v}(\R^{N})$, we get that $Q$ is also continuous. Furthermore, if we are on the boundary of $D$, that is,   $(\tau- \tau_{0})^{2}+(\delta- \delta_0)^{2}= \zeta^{2}$, then $\varPhi= 0$ according to the definition. Therefore, we get  $T(\tau, \delta)= \tau u^{+}+\delta u^{-}$ and by Lemma \ref{frl1}, we deduce 
\begin{equation*}
Q(\tau, \delta)\neq 0 \quad\mbox{ on } \partial{D}.
\end{equation*}
Hence, the Brouwer degree is well defined and $\deg(Q, {\rm int} (D), (0, 0))=1$ and there exists $(\tau_1, \delta_1)\in {\rm int} (D)$ such that $Q(\tau_1, \delta_1)= (0, 0)$. Therefore, we get that $T(\tau_1, \delta_1)\in {\cal \overline{N}}_{\lambda}$ and by the definition of $\overline{d_\lambda}$ we deduce that
\begin{equation}\label{fr5}
{\mathcal L}_\lambda(T(\tau_1, \delta_1))\geq \overline{d_\lambda}.
\end{equation}
Next, by equation \eqref{fr4}, we have
\begin{equation}\label{fr6}
\begin{aligned}
{\mathcal L}_\lambda(T(\tau_1, \delta_1))&= {\mathcal L}_\lambda(\tau_1 u^{+}+\delta_{1} u^{-})+\int_{0}^{1}\frac{d}{dt}{\mathcal L}_\lambda(\tau_1 u^{+}+\delta_{1} u^{-}+\zeta t \varPhi(\tau_1, \delta_1)\bar{\sigma})dt \\
&= {\mathcal L}_\lambda(\tau_1 u^{+}+\delta_{1} u^{-})-\zeta \varPhi(\tau_1, \delta_1).
\end{aligned}
\end{equation}
Now, by definition of $\varPhi$ we have $\varPhi(\tau_1, \theta_1)=1$ when $(\tau_1, \delta_1)=(\tau_0, \delta_0)$. Hence, we deduce that
$$
{\mathcal L}_\lambda(T(\tau_1, \delta_1))\leq {\mathcal L}_\lambda(\tau_1 u^{+}+\delta_{1} u^{-})-\zeta\leq \overline{d_\lambda}-\zeta< \overline{d_\lambda}.
$$
The case when $(\tau_1, \delta_1)\neq (\tau_0, \delta_0)$, then by Lemma \ref{frl1} we have 
$$
{\mathcal L}_\lambda(\tau_1 u^{+}+\delta_{1} u^{-})< {\mathcal L}_\lambda(\tau_0 u^{+}+\delta_{0} u^{-})= \overline{d_\lambda},
$$
which further gives
$$
{\mathcal L}_\lambda(T(\tau_1, \delta_1))\leq {\mathcal L}_\lambda(\tau_1 u^{+}+\delta_{1} u^{-})< \overline{d_\lambda}.
$$
This contradicts the equation \eqref{fr5} and with this we conclude our proof. 

\medskip

\section*{Acknowledgements}
The author acknowledges the financial support of Irish Research Council Postdoctoral Scholarship number R13165.

\end{document}